\def\BibTeX{{\rm B\kern-.05em{\sc i\kern-.025em b}\kern-.08em
    T\kern-.1667em\lower.7ex\hbox{E}\kern-.125emX}}
\def\qed{\unskip\kern10pt{\unitlength1pt\linethickness{.4pt}\framebox(6,6){}}}
\newtheorem{theorem}{Theorem}[section]
\newtheorem{note}{Note}[section]
\newtheorem{example}{Example}[section]
\newcommand{\Mid}{\text{\rm m}}
\newcommand{\Rset}{\mathbb{R}}
\def\minus{%
  \setbox0=\hbox{-}%
  \vcenter{%
    \hrule width\wd0 height \the\fontdimen8\textfont3%
  }%
}
\newtheorem{definition}{Definition}[section]
\newtheorem{remark}{Remark}[section]
\begin{document}

\title{Runge--Kutta Theory and Constraint Programming}
\footnote{This is a revised version of ``Runge--Kutta Theory and Constraint Programming'', Reliable Computing vol. 25, 2017.}

\author{Julien Alexandre dit Sandretto}
\email{alexandre@ensta.fr}
\address{U2IS, ENSTA ParisTech, Universit\'e Paris-Saclay, 828 bd des Mar\'echaux, 91762 Palaiseau cedex France}



\date{March 2018}

\begin{abstract}
  There exist many Runge--Kutta methods (explicit or implicit), more or
  less adapted to specific problems.  Some of them have
  interesting properties, such as stability for stiff problems or
  symplectic capability for problems with energy conservation.  Defining
  a new method suitable to a given problem has become a
  challenge. The size, the complexity and the order do not stop 
  growing. 
  This informal challenge to implement the best method is interesting but an
  important unsolved problem persists. Indeed, the coefficients of Runge--Kutta 
  methods are  harder and harder to compute, and the result is often expressed in
  floating-point numbers, which may lead to erroneous integration
  schemes. Here, we propose  to use interval analysis tools
  to compute Runge--Kutta coefficients. In particular, we use a solver based
  on guaranteed constraint programming.  Moreover, with a global
  optimization process and a well chosen cost function, we propose a
  way to define some novel optimal Runge--Kutta methods.
\end{abstract}
\keywords{Runge--Kutta methods, Differential equations, Validated simulation.}

\subjclass[2000]{4A45,65G20,65G40}

\maketitle

\section{Introduction}
\label{sec:introduction}

Many scientific applications in physical fields such as mechanics,
robotics, chemistry or electronics require solving differential
equations. This kind of equation appears for example, when the location is 
required, but only the velocity
and/or the acceleration are available when modelling a system. In
the general case, these differential equations cannot be formally
integrated, that is to say, closed form solutions are not available, and a
numerical integration scheme is used to approximate the state of the
system. The most classical approach is to use a Runge--Kutta scheme --
carefully chosen with respect to the problem, desired accuracy, and so
on -- to simulate the system behaviour.

Historically, the first method for numerical solution of differential equations was proposed by 
Euler in {\it
  Institutiones Calculi Integralis}~\cite{euler_1792}. His main idea
is based on a simple principle: if a particle is located at $y_0$ at
time $t_0$ and if its velocity at this time is known to be equal to
$v_0$, then at time $t_1$ the particle will be approximately at
position $y_1=y_0+(t_1-t_0) v_0$, under the condition that $t_1$ is
sufficiently close to $t_0$ (that is, after a very short time), so
velocity do not change ``too much'' over $[t_0,t_1]$.  Based on this principle,
around 1900 C. Runge and M. W. Kutta developed a family of iterative
methods, now called Runge--Kutta methods. While many such methods have been proposed since then, a unified
formalism and a deep analysis was first proposed by John Butcher in the
sixties~\cite{Butcher63}.

Almost from the beginning, after Euler, a race started to obtain new
schemes, with better properties or higher order of accuracy. It quickly became a
global competition. Recently, an explicit $14^{\mbox{\scriptsize{}th}}$ order Runge--Kutta scheme
with $35$ stages~\cite{rk14} and an implicit $17^{\mbox{\scriptsize{}th}}$ order Radau
with $9$ stages~\cite{radau17} were proposed. From
the beginning, methods have been discovered with the help of ingenuity in 
order to solve the highly complex problem, such as use of polynomials with
known zeros (Legendre for Gauss methods or Jacobi for
Radau)~\cite{HNW93}, vanishing of some coefficients~\cite{HNW93}, or
symmetry~\cite{rk14}. All these approaches, based on algebraic manipulations, are reaching their limit, 
due to the large number of stages. 
Indeed, to obtain a new method, we now need to solve a high-dimensional under-determined problem
with floating-point arithmetic~\cite{handbookfloat09}. Even if, as in some, multi-precision arithmetic
is used, the result obtained is still
not exact. A restriction  Runge--Kutta methods which have 
coefficients represented exactly in the computer can be eventually 
considered \cite{marciniak2004representation}. However, this restriction 
is really strong, because only few methods can be used, and it is the 
opposite of our approach. 

For this reason, in this paper we introduce application of interval coefficients for
Runge--Kutta methods; this could be an
interesting research direction for defining new reliable numerical
integration methods. 
We show that the properties of a Runge--Kutta scheme (such as order, stability, symplecticity, etc.) can be preserved with interval coefficients, 
while they are lost with floating-point numbers. 
By the use
of interval analysis tools~\cite{JKDW01,moore66}, and more
specifically a constraint programming (CP)
solver~\cite{rueher_csp_2005}, a general method to build
new methods with interval coefficients is presented. Moreover, an
optimization procedure allows us, with a well chosen cost function,
to define the optimal scheme.
The new methods with interval coefficients, obtained with our approach, have properties inclusion properties, meaning that the resulting interval box is guaranteed to contain 
a scheme that satisfies all the desired properties. 
They can be either used in a classical numerical integration procedure (but computations have to be done with interval arithmetic), or in a validated integration one \cite{alexandre2016validated}. In both cases, the properties of the scheme will be preserved. 

In this paper, a recurring reference will be made to the books by 
Hairer et al~\cite{HNW93}, which contains the majority of the results on
Runge--Kutta theory.

\paragraph{Outline.} We review the classical algorithm of a
simulation of an ordinary differential equation with Runge--Kutta
methods,  as well as a brief
introduction to the modern theory of Runge--Kutta methods, 
in Section~\ref{sec:recall-on-rk-methods}. In
Section~\ref{sec:rk_interval}, we present the interval analysis
framework used in this work and the advantages of having Runge--Kutta
methods with interval coefficients. We analyze some of the properties
of Runge--Kutta methods with and without interval coefficients in
Section~\ref{sec:properties}. In Section~\ref{sec:cp_cost}, the
constraint satisfaction problem to solve to obtain a new
scheme is presented.  In Section~\ref{sec:expe}, we present some
experimental results, followed in 
Section~\ref{sec:application} by the application  of the new schemes
in validated simulation. In
Section~\ref{sec:conclusion}, we summarize the main contributions of
the paper.

\paragraph{Notation.}
\begin{itemize}
 \item $\dot{y}$ denotes the time derivative of $y$, that is, $\frac{d y}{d
  t}$. 
 \item $a$ denotes a real value, while $\textbf{a}$ represents a
vector of real values. 
\item $[a]$ represents an interval value and $[\mathbf{a}]$ represents a vector of interval values (a box). 
\item The midpoint of an interval $[x]$ is denoted by $\Mid([x])$. 
\item The variables $y$ are used for the state variables of the system and $t$
represents time. 
\item Sets will be represented by calligraphic letter such as $\mathcal{X}$ or $\mathcal{Y}$. 
\item The real part and the imaginary part of a complex number $z$ will be denoted by $Re(z)$ and
$Im(z)$ respectively.
\item An interval with floating point bounds is written in the short form, for example\\ $0.123456[7,8]$ to represent the interval $[0.1234567,0.1234568]$. 
\end{itemize}

\section{A Review of Runge--Kutta Methods}
\label{sec:recall-on-rk-methods}

Historically, Runge--Kutta methods were used to compute a Taylor
series expansion without any derivative computation, which was a
difficult problem in the $19^{\mbox{\scriptsize{}th}}$ Century. Now, \emph{automatic
  differentiation} methods~\cite{Griewank:SIAM00} can be used to
efficiently compute derivatives, but Runge--Kutta methods are more than
a simple technique to compute a Taylor series expansion. Mainly,
Runge--Kutta methods have strong stability properties (see
Section~\ref{sec:properties} for a more formal definition), which make
them suitable for efficiently solving different classes of problems,
especially stiff systems. In particular, implicit methods can be
algebraically stable, stiffly accurate and symplectic (see Section~\ref{symplecticity}). For this
reason, the study of the properties of Runge--Kutta methods is highly
interesting, and the definition of new techniques to build new
Runge--Kutta methods with strong properties is also of interest.

\subsection{Numerical Integration with Runge--Kutta Methods}
\label{sec:numer-solut-init}

Runge--Kutta methods can solve the \emph{initial value problem}
(\textit{IVP}) of non-autonomous \emph{Ordinary Differential
  Equations} (ODEs) defined by
\begin{equation}
  \label{eq:ivp}
  \dot{\mathbf{y}}=\mathbf{f}(t, \mathbf{y})\quad\text{with}\quad
  \mathbf{y}(0) = \mathbf{y}_{0}\quad\text{and}\quad t\in[0,t_{\text{end}}]
  \enspace.
\end{equation}
The function $\mathbf{f}: \Rset \times \Rset^n \to \Rset^n$ is called the
{\it vector field}, $\mathbf{y}\in\Rset^n$ is called the {\it vector of state variables}, and
$\dot{\mathbf{y}}$ denotes the derivative of $\mathbf{y}$ with respect to
time $t$. We shall always assume at least that $\mathbf{f}$ is
globally Lipschitz in $\mathbf{y}$, so Equation~\eqref{eq:ivp} admits
a unique solution~\cite{HNW93} for a given initial condition
$\mathbf{y}_0$. Furthermore, for our purpose, we shall assume,  as needed, that
$\mathbf{f}$ is continuously differentiable. The exact
solution of Equation~\eqref{eq:ivp} is denoted by $\mathbf{y}(t;
\mathbf{y}_0)$, the {\it flow}.

The goal of a numerical simulation to solve Equation~\eqref{eq:ivp} is
to compute a sequence of time instants $0 = t_{0} < t_{1} < \cdots <
t_{N} = t_{\text{end}}$ (not necessarily equidistant) and a sequence
of states $\mathbf{y}_0,\ \dots,\ \mathbf{y}_N$ such that $\forall
\ell\in[0,N]$, $\mathbf{y}_{\ell} \approx
\mathbf{y}(t_\ell,\mathbf{y}_{\ell-1})$, obtained with the help of an
integration scheme.

A Runge--Kutta method, starting from an initial value $\mathbf{y}_\ell$
at time $t_\ell$ and a finite time horizon $h$, the \emph{step-size},
produces an approximation $\mathbf{y}_{\ell+1}$ at time $t_{\ell+1}$,
with $t_{\ell+1}-t_\ell = h$, of the solution $\mathbf{y}(t_{\ell+1};
\mathbf{y}_\ell)$. Furthermore, to compute $\mathbf{y}_{\ell+1}$, a
Runge--Kutta method computes $s$ evaluations of $f$ at predetermined
time instants. The number $s$ is known as the number of \emph{stages}
of a Runge--Kutta method. More precisely, a Runge--Kutta method is
defined by
\begin{equation}
  \label{eq:rk-increment}
  \mathbf{y}_{\ell+1} = \mathbf{y}_\ell + h \sum_{i=1}^{s}b_i \mathbf{k}_i,
\end{equation}
with $\mathbf{k}_i$ defined by
\begin{equation}
  \label{eq:rk-intermediate}
  \mathbf{k}_i = \mathbf{f}\left(t_\ell + c_i h,
    \mathbf{y}_\ell + h \sum_{j=1}^{s} a_{ij}\mathbf{k}_j \right).
\end{equation}
The coefficients $c_i$, $a_{ij}$ and $b_i$, for $i,j=1,2,\cdots, s$,
fully characterize the Runge--Kutta methods, and they are usually
synthesized in a \textit{Butcher tableau} \cite{Butcher63} of the form
\begin{displaymath}
  \begin{aligned}
    \begin{array}{c|cccc}
      c_1 & a_{11} & a_{12} & \dots & a_{1s}\\
      c_2 & a_{21} & a_{22} & \dots & a_{2s}\\
      \vdots & \vdots & \vdots & \ddots & \vdots\\
      c_s & a_{s1} & a_{s2} & \dots & a_{ss} \\
      \hline & b_1 & b_2 & \dots & b_s\\
    \end{array}
    & \qquad\equiv\qquad &
    \begin{array}{c | c}
      \mathbf{c} & \mathbf{A}
      \\
      \hline
      & \mathbf{b}
    \end{array}
  \end{aligned}
  \enspace.
\end{displaymath}

In terms of the form of the matrix $\mathbf{A}$, consisting of the
coefficients $a_{ij}$, a Runge--Kutta method can be
\begin{itemize}
\item \emph{explicit}, for example, as in the classical Runge--Kutta method of
  order~$4$ given in Figure~\ref{fig:butcher-tableau-rk4}. In other
  words, the computation of the intermediate $\mathbf{k}_i$ only
  depends on the previous steps $\mathbf{k}_j$ for $j < i$;
\item \emph{diagonally implicit}, for example, as in the diagonally implicit
  fourth-order method given in
  Figure~\ref{fig:butcher-tableau-sdirk4}. In this case, the
  computation of an intermediate step $\mathbf{k}_i$ involves the
  value $\mathbf{k}_i$, so non-linear systems in $\mathbf{k}_i$
  must be solved. A method is \emph{singly diagonally implicit} if the
  coefficients on the diagonal are all equal;
\item \emph{fully implicit}, for example, the Runge--Kutta fourth-order
  method with a Lobatto quadrature formula given in
  Figure~\ref{fig:butcher-tableau-lobatto3c4}. In this last case, the
  computation of intermediate steps involves the solution of a
  non-linear system of equations in all the values $\mathbf{k}_i$ for
  $i=1,2,\cdots,s$.
\end{itemize}

\begin{figure}[t]
  \footnotesize
  \centering
  \subfigure[RK4]{
    \label{fig:butcher-tableau-rk4}
    \begin{tabular}{r|rrrr}
     $0$ & $0$ & $0$ & $0$ & $0$\\
        $\frac{\strut 1}{2}$ & $\frac{\strut 1}{2}$ & $0$ & $0$ & $0$\\
        $\frac{\strut 1}{2}$ & $0$ & $\frac{\strut 1}{2}$ & $0$ & $0$\\
        $1$ & $0$ & $0$ & $1$ & $0$\\[3pt]
        \hline & $\frac{\strut 1}{6}$ & $\frac{\strut 1}{3}$ & $\frac{\strut 1}{3}$ & $\frac{\strut 1}{6}$
    \end{tabular}

  }
  \quad
    \subfigure[Lobatto3c]{
    \label{fig:butcher-tableau-lobatto3c4}
    \begin{tabular}{r|rrr}
        $0$ & $\frac{\strut 1}{6}$ & $\minus\frac{\strut 1}{3}$& $\frac{\strut 1}{6}$ \\
        $\frac{\strut 1}{2}$ & $\frac{\strut 1}{6}$ & $\frac{\strut 5}{12}$& $\minus\frac{\strut 1}{12}$\\
        $1$ & $\frac{\strut 1}{6}$ & $\frac{\strut 2}{3}$ & $\frac{\strut 1}{6}$ \\[5pt]
        \hline
        & $\frac{\strut 1}{6}$ & $\frac{\strut 2}{3}$ & $\frac{\strut 1}{6}$
    \end{tabular}

  }
  
  \subfigure[SDIRK4]{
    \label{fig:butcher-tableau-sdirk4}
    \begin{tabular}{r|rrrrr}
     $\frac{\strut 1}{4}$ &$\frac{\strut 1}{4}$ & & & & \\
       $ \frac{\strut 3}{4}$ &$\frac{\strut 1}{2}$ & $\frac{\strut 1}{4}$ & & & \\
       $ \frac{\strut 11}{20}$ &$\frac{\strut 17}{50}$ & $\minus \frac{\strut 1}{25}$ & $\frac{\strut 1}{4}$ & & \\
       $ \frac{\strut 1}{2} $&$\frac{\strut 371}{1360}$ & $\minus\frac{\strut 137}{2720}$ & $\frac{\strut 15}{544}$ & $\frac{\strut 1}{4}$ & \\
       $ 1$ &$\frac{\strut 25}{24} $& $\minus\frac{\strut 49}{48}$ & $\frac{\strut 125}{16}$ & $\minus\frac{\strut 85}{12}$ & $\frac{\strut 1}{4}$\\[3pt]
        \hline & $\frac{\strut 25}{24}$ & $\minus\frac{\strut 49}{48}$ & $\frac{\strut 125}{16}$ & $\minus\frac{\strut 85}{12}$ & $\frac{\strut 1}{4}$
    \end{tabular}

  }

  \caption{Different kinds of Runge--Kutta methods}
  \label{fig:different-kinds-rk}
\end{figure}

The order of a Runge--Kutta method is $p$ if and only if the
\emph{local truncation error}, in other words, the distance between the exact
solution $\mathbf{y}(t_\ell; \mathbf{y}_{\ell-1})$ and the numerical solution $\mathbf{y}_{\ell}$
is such that:
\begin{displaymath}
  \mathbf{y}(t_\ell; \mathbf{y}_{\ell-1}) - \mathbf{y}_{\ell} = \mathcal{O}(h^{p+1})\enspace.
\end{displaymath}

Some theoretical results have been obtained concerning the relation between 
the number of stages $s$ and the order $p$. For the explicit methods, 
there  is no Runge--Kutta method of order $p$ with $s=p$ stages when $p > 4$. 
For the implicit methods, $p=2s$ is the largest possible order for a given 
number of stages, and only Gauss-Legendre methods have this capability \cite{HNW93}. 

\subsection{Butcher's Theory of Runge--Kutta Methods}
\label{sec:butcher-theory-Runge--Kutta}

One of the main ideas of Butcher in~\cite{Butcher63} is to
express the
Taylor expansion of the exact solution of~\eqref{eq:ivp} and the Taylor expansion of
the numerical solution using the same basis of \emph{elementary differentials} . 
The elementary differentials are made of sums
of partial derivatives of $f$ with respect to the components of
$\mathbf{y}$. Another salient idea of Butcher in~\cite{Butcher63}
is to relate these partial derivatives of order $q$ to a combinatorial
problem to enumerate all the trees $\tau$ with exactly $q$ nodes. From
the structure of a tree $\tau$, one can map a particular partial
derivative; see
Table~\ref{tab:trees-elementary-differentials-coefficients} for some
examples. It follows that one has the three following theorems, used
used to express the order condition of Runge--Kutta methods. In
theorems~\ref{thm:derivative-exact-solution}
and~\ref{thm:derivative-numerical-solution}, $\tau$ is a rooted tree,
$F(\tau)$ is the elementary differential associated with $\tau$,
$r(\tau)$ is the order of $\tau$ (the number of nodes it contains),
$\gamma(\tau)$ is the density, $\alpha(\tau)$ is the number of
equivalent trees and $\psi(\tau)$ the elementary weight of $\tau$
based on the coefficients $c_i$, $a_{ij}$ and $b_i$ defining a
Runge--Kutta method; see \cite{Butcher63} for more
details. Theorem~\ref{thm:derivative-exact-solution} defines the
$q$-th time derivative of the exact solution expressed with elementary
differentials. Theorem~\ref{thm:derivative-numerical-solution} defines
the $q$-th time derivative of the numerical solution expressed with elementary
differentials. Finally, Theorem~\ref{thm:lte-butcher} formally defines
the order condition of the Runge--Kutta methods.

\begin{table}[!t]
  \centering
  \caption{Rooted trees $\tau$, elementary differentials $F(\tau)$, and
    their coefficients}
  \label{tab:trees-elementary-differentials-coefficients}
  \begin{tabular}{| c | c | c | c | c | c |}
    \hline
    $r(\tau)$ & Trees & $F(\tau)$ &  $\alpha(\tau)$ & $\gamma(\tau)$ &
    $\phi(\tau)$
    \\
    \hline
    \hline
    $1$ &
    \begin{tikzpicture}
      \node[draw, circle, inner sep=1pt, fill] {};
    \end{tikzpicture}
    &
    $\mathbf{f}$
    &
    $1$
    &
    $1$
    &
    $\sum_i b_i$
    \\
    \hline
    $2$
    &
    \begin{tikzpicture}[node distance=10pt]
      \node (up) [draw, circle, inner sep=1pt, fill] {};
      \node (down) [draw, circle, inner sep=1pt, fill, below of=up] {};
      \draw[-] (up) -- (down);
    \end{tikzpicture}
    &
    $\mathbf{f}'\mathbf{f}$
    &
    $1$
    &
    $2$
    &
    $\sum_{ij}b_ia_{ij}$
    \\
    \hline
    $3$
    &
    \begin{tikzpicture}[node distance=10pt]
      \node (r1) [draw, circle, inner sep=1pt, fill] {};
      \node (t1) [draw, circle, inner sep=1pt, fill, above left of=r1] {};
      \node (t2) [draw, circle, inner sep=1pt, fill, above right of=r1] {};
      \draw[-] (r1) -- (t1);
      \draw[-] (r1) -- (t2);
    \end{tikzpicture}
    &
    $\mathbf{f}''(\mathbf{f}, \mathbf{f})$
    &
    $1$
    &
    $3$
    &
    $\sum_{ijk}b_i a_{ij}a_{ik}$
    \\
    $3$
    &
    \begin{tikzpicture}[node distance=10pt]
      \node (r1) [draw, circle, inner sep=1pt, fill] {};
      \node (t1) [draw, circle, inner sep=1pt, fill, above left of=r1] {};
      \node (t2) [draw, circle, inner sep=1pt, fill, above right of=t1] {};
      \draw[-] (r1) -- (t1);
      \draw[-] (t1) -- (t2);
    \end{tikzpicture}
    &
    $\mathbf{f}'\mathbf{f}'\mathbf{f}$
    &
    $1$
    &
    $6$
    &
    $\sum_{ijk} b_i a_{ij}a_{jk}$
    \\
    \hline
    $4$
    &
    \begin{tikzpicture}[node distance=10pt]
      \node (r1) [draw, circle, inner sep=1pt, fill] {};
      \node (t1) [draw, circle, inner sep=1pt, fill, above left of=r1] {};
      \node (t2) [draw, circle, inner sep=1pt, fill, above right of=r1] {};
      \node (t3) [draw, circle, inner sep=1pt, fill, above of=r1] {};
      \draw[-] (r1) -- (t1);
      \draw[-] (r1) -- (t2);
      \draw[-] (r1) -- (t3);
    \end{tikzpicture}
    &
    $\mathbf{f}'''(\mathbf{f}, \mathbf{f}, \mathbf{f})$
    &
    $1$
    &
    $4$
    &
    $\sum_{ijkl} b_i a_{ij}a_{ik}a_{il}$
    \\
    $4$
    &
    \begin{tikzpicture}[node distance=10pt]
      \node (r1) [draw, circle, inner sep=1pt, fill] {};
      \node (t1) [draw, circle, inner sep=1pt, fill, above left of=r1] {};
      \node (t2) [draw, circle, inner sep=1pt, fill, above right of=r1] {};
      \node (t3) [draw, circle, inner sep=1pt, fill, above left of=t2] {};
      \draw[-] (r1) -- (t1);
      \draw[-] (r1) -- (t2);
      \draw[-] (t2) -- (t3);
    \end{tikzpicture}
    &
    $\mathbf{f}''(\mathbf{f}'\mathbf{f}, \mathbf{f})$
    &
    $3$
    &
    $8$
    &
    $\sum_{ijkl} b_i a_{ij}a_{ik}a_{jl}$
    \\
    $4$
    &
    \begin{tikzpicture}[node distance=10pt]
      \node (r0) [draw, circle, inner sep=1pt, fill] {};
      \node (r1) [draw, circle, inner sep=1pt, fill, above of=r0] {};
      \node (t1) [draw, circle, inner sep=1pt, fill, above left of=r1] {};
      \node (t2) [draw, circle, inner sep=1pt, fill, above right of=r1] {};
      \draw[-] (r0) -- (r1);
      \draw[-] (r1) -- (t1);
      \draw[-] (r1) -- (t2);
    \end{tikzpicture}
    &
    $\mathbf{f}'\mathbf{f}''(\mathbf{f},\mathbf{f})$
    &
    $1$
    &
    $12$
    &
    $\sum_{ijkl} b_i a_{ij}a_{jk}a_{jl}$
    \\
    $4$
    &
    \begin{tikzpicture}[node distance=10pt]
      \node (r1) [draw, circle, inner sep=1pt, fill] {};
      \node (t1) [draw, circle, inner sep=1pt, fill, above left of=r1] {};
      \node (t2) [draw, circle, inner sep=1pt, fill, above right of=t1] {};
      \node (t3) [draw, circle, inner sep=1pt, fill, above left of=t2] {};
      \draw[-] (r1) -- (t1);
      \draw[-] (t1) -- (t2);
      \draw[-] (t2) -- (t3);
    \end{tikzpicture}
    &
    $\mathbf{f}'\mathbf{f}'\mathbf{f}'\mathbf{f}$
    &
    $1$
    &
    $24$
    &
    $\sum_{ijkl} b_i a_{ij}a_{jk}a_{kl}$
    \\
    \hline
  \end{tabular}
\end{table}

\begin{theorem}
  \label{thm:derivative-exact-solution}
  The $q$-th derivative w.r.t. time of the \emph{exact solution} is
  given by
  \begin{displaymath}
    \mathbf{y}^{(q)} = \sum_{r(\tau)=q} \alpha(\tau) F(\tau)(\mathbf{y}_0)
    \enspace.
  \end{displaymath}
\end{theorem}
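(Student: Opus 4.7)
The plan is to proceed by induction on $q$. The base case $q = 1$ is immediate: the IVP itself gives $\dot{\mathbf{y}} = \mathbf{f}(\mathbf{y})$, and the only rooted tree with $r(\tau) = 1$ is the single node $\bullet$, for which $\alpha(\bullet) = 1$ and $F(\bullet) = \mathbf{f}$, matching the claimed formula. For the inductive step, assuming the formula at order $q$, I differentiate both sides with respect to $t$ along the flow and use $\dot{\mathbf{y}} = \mathbf{f}(\mathbf{y})$ to obtain
\[
\mathbf{y}^{(q+1)} = \sum_{r(\tau)=q} \alpha(\tau)\, \frac{d}{dt} F(\tau)(\mathbf{y}(t)).
\]

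The analytic heart of the argument is then the grafting identity
\[
\frac{d}{dt} F(\tau)(\mathbf{y}(t)) = \sum_{v \in V(\tau)} F(\tau \oplus_v \bullet)(\mathbf{y}(t)),
\]
where $\tau \oplus_v \bullet$ denotes the tree obtained from $\tau$ by attaching a new leaf at node $v$. I would prove this by a secondary induction on the depth of $\tau$: starting from the recursive definition $F(\tau) = \mathbf{f}^{(k)}(F(\tau_1), \ldots, F(\tau_k))$ with root degree $k$ and subtrees $\tau_1, \ldots, \tau_k$, the Leibniz rule splits $\frac{d}{dt} F(\tau)$ into $k$ inner terms plus one outer term. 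By the inductive hypothesis, differentiating each inner $F(\tau_j)$ enumerates exactly the graftings of a new leaf inside $\tau_j$. Differentiating the outer Fr\'echet derivative $\mathbf{f}^{(k)}$ along $\mathbf{f}$ contributes precisely the single term obtained by attaching the new leaf directly at the root of $\tau$, thereby accounting for the remaining grafting site.

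Substituting the grafting identity and collecting by target tree $\tau'$ of order $q+1$, I obtain
\[
\mathbf{y}^{(q+1)} = \sum_{r(\tau')=q+1} \Bigl(\sum_{(\tau,v):\, \tau \oplus_v \bullet = \tau'} \alpha(\tau)\Bigr) F(\tau')(\mathbf{y}(t)),
\]
so the theorem reduces to the combinatorial identity
\[
\alpha(\tau') = \sum_{(\tau,v):\, \tau \oplus_v \bullet = \tau'} \alpha(\tau).
\]
The cleanest way to establish this is to adopt the standard characterization of $\alpha(\tau)$ as the number of monotonic labellings of $V(\tau)$ by $\{1, \ldots, r(\tau)\}$, i.e.\ labellings in which every non-root node carries a strictly larger label than its parent. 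Each monotonic labelling of $\tau'$ is then uniquely recovered from a monotonic labelling of some smaller $\tau$ by appending the maximum label as a new leaf at a node $v$; deleting that leaf produces exactly one pair $(\tau, v)$ on the right-hand side, closing the induction.

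The main obstacle is the double bookkeeping that arises at two places. First, the grafting identity for $\frac{d}{dt} F(\tau)$ must be written down unambiguously for trees of arbitrary branching and depth, with the correspondence between partial derivatives of $\mathbf{f}^{(k)}$ and grafting sites carefully pinned down — this is where Butcher's notation pays off, but the secondary induction still needs to be spelled out. Second, the identity for $\alpha$ is genuinely subtle when $\tau'$ admits non-trivial automorphisms: distinct pairs $(\tau, v)$ may yield the same abstract $\tau'$, and the multiplicities must neither be double-counted nor absorbed into a symmetry. Both difficulties dissolve once $\alpha$ is interpreted as the count of monotonic labellings, at which point the bijection by "remove the maximal label" finishes the proof.
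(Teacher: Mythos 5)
The paper itself offers no proof of this theorem: it is quoted as a classical result of Butcher's theory, with the reader referred to \cite{Butcher63} and to Hairer et al.\ \cite{HNW93}, so there is no in-paper argument to compare yours against. Judged on its own, your proof is correct and is essentially the standard one found in those references: induction on $q$, the grafting identity for $\frac{d}{dt}F(\tau)$ obtained from the Leibniz rule on the nested Fr\'echet derivatives (the outer term attaching the new leaf at the root, the inner terms handled by a secondary induction), and the recursion $\alpha(\tau')=\sum_{(\tau,v)}\alpha(\tau)$ established via the ``delete the node carrying the maximal label'' bijection on monotonic labellings. You correctly identify the two delicate points (unambiguous indexing of grafting sites, and automorphisms of $\tau'$), and your resolution --- summing over actual vertices $v\in V(\tau)$ rather than over automorphism classes, and interpreting $\alpha$ as a count of labellings so that the maximal label necessarily sits on a leaf --- is exactly what makes the count come out right (e.g.\ $\alpha=3$ for $\mathbf{f}''(\mathbf{f}'\mathbf{f},\mathbf{f})$ arises as $1+1+1$ from the two leaves of the cherry plus the root of the path). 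The only caveat worth flagging is that you silently pass to the autonomous form $\dot{\mathbf{y}}=\mathbf{f}(\mathbf{y})$, whereas the paper's IVP \eqref{eq:ivp} is non-autonomous; this is the standard reduction (consistent with the condition $c_i=\sum_j a_{ij}$ and with the autonomous elementary differentials in Table~\ref{tab:trees-elementary-differentials-coefficients}), but it deserves one sentence of justification.
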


\begin{theorem}
  \label{thm:derivative-numerical-solution}
  The $q$-th derivative w.r.t. time of the \emph{numerical solution}
  is given by
  \begin{displaymath}
    \mathbf{y}_1^{(q)}=\sum_{r(\tau)=q} \gamma(\tau) \phi(\tau) \alpha(\tau)
    F(\tau)(\mathbf{y}_0)
    \enspace.
  \end{displaymath}
\end{theorem}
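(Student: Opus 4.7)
The plan is to compute the Taylor expansion of $\mathbf{y}_1(h)$ about $h=0$ by successively differentiating equations \eqref{eq:rk-increment} and \eqref{eq:rk-intermediate}, and then to identify each Taylor coefficient as a tree-indexed sum. First I would observe that, under smoothness of $\mathbf{f}$ (and, in the implicit case, an application of the implicit function theorem to \eqref{eq:rk-intermediate} near $h=0$, where the Jacobian of the stage system reduces to the identity), each $\mathbf{k}_i(h)$ is analytic in $h$ near zero, with $\mathbf{k}_i(0)=\mathbf{f}(t_0,\mathbf{y}_0)$. Leibniz's rule applied to \eqref{eq:rk-increment} will then give the clean reduction $\mathbf{y}_1^{(q)}(0)=q\sum_i b_i\,\mathbf{k}_i^{(q-1)}(0)$, so the whole argument is driven by a formula for the stage derivatives.

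The technical core is an auxiliary induction on $q\ge 1$ establishing
\[
\mathbf{k}_i^{(q-1)}(0)\;=\;\sum_{r(\tau)=q}\frac{\gamma(\tau)\,\alpha(\tau)}{r(\tau)}\,\Phi_i(\tau)\,F(\tau)(\mathbf{y}_0),
\]
where $\Phi_i(\tau)$ is the ``stage weight'' obtained from $\phi(\tau)$ by fixing the root summation index to $i$, so that $\phi(\tau)=\sum_i b_i\Phi_i(\tau)$. The induction step is the essential combinatorial argument: writing $\mathbf{k}_i(h)=\mathbf{f}(t_0+c_ih,\mathbf{g}_i(h))$ with $\mathbf{g}_i(h):=\mathbf{y}_0+h\sum_j a_{ij}\mathbf{k}_j(h)$, the multivariate Fa\`a di Bruno formula expands $\mathbf{k}_i^{(q-1)}(0)$ as a sum of terms $\mathbf{f}^{(m)}(\mathbf{g}_i^{(q_1)}(0),\dots,\mathbf{g}_i^{(q_m)}(0))$ over compositions $q-1=q_1+\cdots+q_m$. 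Each $\mathbf{g}_i^{(q_k)}(0)$ reduces by Leibniz's rule on the definition of $\mathbf{g}_i$ to $q_k\sum_j a_{ij}\mathbf{k}_j^{(q_k-1)}(0)$, at which point the induction hypothesis produces a tree-indexed expression whose root index is $j$ and whose subtrees have orders $q_k$. Thus the multivariate chain rule corresponds precisely to Butcher's grafting operation: a partial derivative $\mathbf{f}^{(m)}$ becomes a new root with $m$ subtrees attached. The multinomial coefficients from Fa\`a di Bruno combine with the factorials from Leibniz to produce $\alpha(\tau)$, counting orderings of equivalent subtrees, and the recursive identity $\gamma(\tau)=r(\tau)\prod_k\gamma(\tau_k)$ forces the density to accumulate with the stated power.

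Finally, substituting the auxiliary formula into $\mathbf{y}_1^{(q)}(0)=q\sum_i b_i\,\mathbf{k}_i^{(q-1)}(0)$, using $\phi(\tau)=\sum_i b_i\Phi_i(\tau)$, and noting that $q/r(\tau)=1$ when $r(\tau)=q$, yields exactly $\sum_{r(\tau)=q}\gamma(\tau)\phi(\tau)\alpha(\tau)F(\tau)(\mathbf{y}_0)$. The main obstacle I anticipate is the combinatorial bookkeeping in the induction step: one must verify that Fa\`a di Bruno's multinomial coefficients recombine correctly into $\alpha(\tau)$ and that the factor $\gamma(\tau)/r(\tau)$ accumulates with precisely the right arithmetic as subtrees are grafted onto new roots. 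This is exactly Butcher's original insight---that the iterated chain rule applied to the stage equations of a Runge--Kutta method is governed by the combinatorics of rooted trees---and once that identification is made carefully for one stage, the passage from $\mathbf{k}_i$ to $\mathbf{y}_1$ is the trivial summation weighted by $b_i$.
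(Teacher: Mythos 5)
The paper does not actually prove this theorem: it is quoted as a classical result of Butcher's theory, with the reader sent to \cite{Butcher63} (and to Hairer et al.) for the argument. Your sketch reconstructs precisely that canonical proof --- Leibniz's rule on the increment formula to reduce everything to the stage derivatives, an induction on $q$ for $\mathbf{k}_i^{(q-1)}(0)$ driven by Fa\`a di Bruno, the identification of the chain rule with grafting subtrees onto a new root, and the recursions $\gamma(\tau)=r(\tau)\prod_k\gamma(\tau_k)$ and $\phi(\tau)=\sum_i b_i\Phi_i(\tau)$ --- and your normalization $\gamma(\tau)\alpha(\tau)\Phi_i(\tau)/r(\tau)$ for the stage derivatives is arithmetically consistent with the stated formula once you multiply by $q\sum_i b_i$. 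The outline is correct and is essentially the same route as the reference the paper relies on; the only part you have left genuinely unverified is the one you flag yourself, namely that the set-partition multiplicities from Fa\`a di Bruno recombine into $\alpha(\tau)$, which is where the real work of Butcher's proof lives.
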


\begin{theorem}[Order condition]
  \label{thm:lte-butcher}
  A Runge--Kutta method has order $p$ iff
  \begin{displaymath}
    \phi(\tau) = \frac{1}{\gamma(\tau)}\quad
    \forall \tau, r(\tau) \leqslant p
    \enspace.
  \end{displaymath}
\end{theorem}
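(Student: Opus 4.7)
The plan is to compare the Taylor series in $h$ of the exact solution $\mathbf{y}(t_\ell+h;\mathbf{y}_\ell)$ and of the numerical solution $\mathbf{y}_{\ell+1}$ produced by the Runge--Kutta scheme, then equate coefficients tree by tree using Theorems~\ref{thm:derivative-exact-solution} and~\ref{thm:derivative-numerical-solution}.

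First I would assume without loss of generality that $\ell=0$. Smoothness of $\mathbf{f}$ lets me write, for any fixed integer $p\geqslant 0$,
\begin{equation*}
\mathbf{y}(t_0+h;\mathbf{y}_0) \;=\; \mathbf{y}_0 + \sum_{q=1}^{p} \frac{h^{q}}{q!}\,\mathbf{y}^{(q)}(t_0) + \mathcal{O}(h^{p+1}),
\end{equation*}
and similarly, viewing $\mathbf{y}_{1}$ as a smooth function of the step size $h$ (it is defined by Equations~\eqref{eq:rk-increment}--\eqref{eq:rk-intermediate}, which at $h=0$ reduce to $\mathbf{y}_{1}=\mathbf{y}_0$),
\begin{equation*}
\mathbf{y}_{1} \;=\; \mathbf{y}_0 + \sum_{q=1}^{p} \frac{h^{q}}{q!}\,\mathbf{y}_1^{(q)}(0) + \mathcal{O}(h^{p+1}).
\end{equation*}
Substituting the expressions from Theorems~\ref{thm:derivative-exact-solution} and~\ref{thm:derivative-numerical-solution}, the local truncation error becomes
\begin{equation*}
\mathbf{y}(t_0+h;\mathbf{y}_0) - \mathbf{y}_{1}
\;=\; \sum_{q=1}^{p} \frac{h^{q}}{q!}\sum_{r(\tau)=q} \alpha(\tau)\bigl(1-\gamma(\tau)\phi(\tau)\bigr) F(\tau)(\mathbf{y}_0) \;+\; \mathcal{O}(h^{p+1}).
\end{equation*}

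From this identity the order condition follows in both directions. For the \emph{if} direction, assuming $\phi(\tau)=1/\gamma(\tau)$ for every tree with $r(\tau)\leqslant p$, every term in the finite sum vanishes identically, so the remainder is $\mathcal{O}(h^{p+1})$ and the method has order at least $p$. For the \emph{only if} direction, suppose the method has order $p$; then for every smooth vector field $\mathbf{f}$ and every initial point $\mathbf{y}_0$, the displayed sum is $\mathcal{O}(h^{p+1})$. Matching powers of $h$ separately for each $q\leqslant p$ yields
\begin{equation*}
\sum_{r(\tau)=q} \alpha(\tau)\bigl(1-\gamma(\tau)\phi(\tau)\bigr) F(\tau)(\mathbf{y}_0) \;=\; 0 \qquad \text{for all admissible } \mathbf{f},\ \mathbf{y}_0.
\end{equation*}

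The main obstacle is to conclude, from this vanishing for all $\mathbf{f}$ and $\mathbf{y}_0$, that each individual coefficient $1-\gamma(\tau)\phi(\tau)$ is zero. This is Butcher's \emph{independence of elementary differentials}: as $\mathbf{f}$ ranges over smooth vector fields, the family $\{F(\tau)\}_{r(\tau)=q}$ is linearly independent as a function of $\mathbf{f}$. I would establish this by exhibiting, for each fixed tree $\tau_0$ of order $q$, a polynomial vector field $\mathbf{f}_{\tau_0}$ tailored so that $F(\tau_0)(\mathbf{y}_0)\neq 0$ while $F(\tau)(\mathbf{y}_0)=0$ for every other tree $\tau\neq\tau_0$ of order at most $q$; the standard construction takes $\mathbf{f}$ to be a product of monomials whose multi-indices encode the branching structure of $\tau_0$, and one then checks that the chain-rule pattern producing $F(\tau)$ picks out precisely $\tau=\tau_0$. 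With this independence in hand, $\alpha(\tau)>0$ forces $\phi(\tau)=1/\gamma(\tau)$ for every tree of order $\leqslant p$, completing the equivalence.
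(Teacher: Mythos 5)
The paper does not actually prove this theorem: it is quoted as background from Butcher's theory, with the reader referred to \cite{Butcher63} and \cite{HNW93} for the argument. So there is nothing in the paper to compare line by line; what can be said is that your plan reproduces the standard proof from those references, and it is essentially correct. The decomposition is the right one: Taylor expansion of both solutions in $h$, substitution of Theorems~\ref{thm:derivative-exact-solution} and~\ref{thm:derivative-numerical-solution}, termwise cancellation for the ``if'' direction, and independence of elementary differentials for the ``only if'' direction. You correctly identify that the entire technical weight of the converse rests on that independence lemma, and the construction you sketch (a polynomial vector field on $\Rset^{q}$ whose components are monomials encoding the parent--child structure of the target tree $\tau_0$, evaluated at a suitable $\mathbf{y}_0$ so that the chain-rule expansion selects exactly $\tau_0$) is the standard and correct one; in a full write-up this lemma is the only part requiring real work. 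Two smaller points deserve a sentence each in a complete proof: for implicit methods the stages $\mathbf{k}_i$ are only defined implicitly by Equation~\eqref{eq:rk-intermediate}, so the claim that $\mathbf{y}_1$ is a smooth function of $h$ near $h=0$ needs the implicit function theorem (using the Lipschitz/differentiability hypotheses on $\mathbf{f}$ and the fact that at $h=0$ the system degenerates to $\mathbf{k}_i=\mathbf{f}(t_0,\mathbf{y}_0)$); and the ``only if'' direction tacitly uses that the definition of order quantifies over all sufficiently smooth $\mathbf{f}$ and all $\mathbf{y}_0$, which is how the order condition is meant in the paper but is worth stating explicitly before invoking independence.
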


These theorems give the necessary and sufficient conditions to define
new Runge--Kutta methods. In other words, they define a system of
equations, where the unkowns are the coefficients $c_i$, $a_{ij}$ and
$b_i$, which characterize a Runge--Kutta method. For example, for the
first four orders, and following the order condition, the following
constraints on the derivative order have to be solved to create a new
Runge--Kutta method
\begin{itemize}
\item order $1$: $\sum_i b_i = 1$
\item order $2$: $\sum_{i} b_i c_i = \frac{1}{2}$
\item order $3$: $\sum_{ij} b_i a_{ij}c_j  = \frac{1}{6}$ ,
  $\sum_i b_i c_i^2 = \frac{1}{3}$
\item order $4$: $\sum_i b_i c_i^3=\frac{1}{4}$, $\sum_{ij} b_i c_i a_{ij}c_j=\frac{1}{8}$, $\sum_{ij} b_i a_{ij} c_j^2=\frac{1}{12}$, $\sum_{ijk}  b_i a_{ij} a_{jk} c_k=\frac{1}{24}$
\end{itemize}
The total number of constraints increases rapidly: $8$ for the
$4^{\mbox{\scriptsize{}th}}$ order, $17$ for the $5^{\mbox{\scriptsize{}th}}$ order, $37$, $85$, $200$, etc.  Note
also an additional constraint, saying that the $c_i$ must be
increasing, has to be taken into account, and also that $c_i$ are such
that
\begin{displaymath}
  c_i = \sum_j a_{ij}\enspace.
\end{displaymath}

\begin{note}
Butcher noticed that the constraint such that the $c_i$ have to increase is not true. Indeed, it is not true for the method given in Figure~\ref{fig:butcher-tableau-sdirk4}. 
This constraint can then be relaxed. 
\end{note}

These constraints are the smallest set of constraints, known as
\emph{Butcher rules}, which have to be validated in order to define new
Runge--Kutta methods.

Additionally, other constraints can be added to define particular
structure of Runge--Kutta methods~\cite{Butcher63}, as for example, to make it
\begin{itemize}
\item Explicit: $a_{ij} = 0, \forall j \ge i$
\item Singly diagonal: $ a_{1,1} = \dots = a_{s,s}$
\item Diagonal implicit: $a_{ij} = 0, \forall j>i$
\item Explicit first line: $a_{11}=\dots=a_{1s}=0$
\item Stiffly accurate: $a_{si} = b_i, \forall i=1,\dots,s$
\end{itemize}

Note that historically, some simplifications of this set of constraints
were used to reduce the complexity of the problem. For example, to
obtain a fully implicit scheme with a method based on Gaussian
quadrature (see \cite{butcher2003numerical} for more details), the $c_1,\dots,c_s$ are the zeros of the shifted Legendre
polynomial of degree $s$, given by:
\begin{equation*}
  \frac{d^s}{dx^s}(x^s(x-1)^s).
\end{equation*}
This approach is called the ``Kuntzmann-Butcher methods'' and is used
to characterize the Gauss-Legendre methods~\cite{butcher2003numerical}. Another example: by finding the zeros of
\begin{equation*}
 \frac{d^{s-2}}{dx^{s-2}}(x^{s-1}(x-1)^{s-1}),
\end{equation*}
the Lobatto quadrature formulas are obtained (see Figure~\ref{fig:butcher-tableau-lobatto3c4}). When the zeros of 
\begin{equation*}
  \frac{d^{s-1}}{dx^{s-1}}(x^{s-1}(x-1)^s).
\end{equation*}
provide the famous Radau IIA quadrature formulas. 

The problems with this approach are obvious. First, the resulting Butcher tableau is guided by the solver and not by the requirements on the
properties. Second, a numerical computation in
floating-point numbers is needed, and because such computations are not exact, the constraints may not be satisfied. 

We propose an interval analysis approach to solve these constraints
and hence produce reliable results. More precisely, we follow the
\emph{constraint satisfaction problem} approach.

\section{Runge--Kutta  with Interval Coefficients}
\label{sec:rk_interval}

As seen before in Section~\ref{sec:butcher-theory-Runge--Kutta}, the
main constraints are the order conditions, also called
\emph{Butcher rules}. Two other constraints need to be considered: the
sum of $a_{ij}$ is equal to $c_i$ for all the table lines; and the
$c_i$ are increasing with respect to $i$. These constraints have to be
fulfilled to obtain a valid Runge--Kutta method, and they can be
gathered in a Constraint Satisfaction Problem (CSP).
\begin{definition}[CSP]
  A numerical (or continuous) CSP $(\mathcal{X},\mathcal{D},
  \mathcal{C})$ is defined as follows:
  \begin{itemize}
  \item $\mathcal{X} = \{x_1,\hdots,x_n\}$ is a set of variables,
    also represented by the vector $\mathbf{x}$.
  \item $\mathcal{D} = \{[x_1],\hdots,[x_n]\}$ is a set of domains
    ($[x_i]$ contains all possible values of $x_i$).
  \item $\mathcal{C} = \{c_1,\hdots,c_m\}$ is a set of constraints of
    the form $c_i(\mathbf{x}) \equiv f_i(\mathbf{x}) = 0$ or
    $c_i(\mathbf{x}) \equiv g_i(\mathbf{x}) \leqslant 0$, with $f_i :
    \Rset^n \to \Rset$, $g_i : \Rset^n \to \Rset$ for $1 \leqslant i
    \leqslant m$. Constraints $\mathcal{C}$ are interpreted as a
    conjunction of equalities and inequalities.
  \end{itemize}
\end{definition}
An evaluation of the variables is a function from a subset of variables to a set of values in the corresponding 
subset of domains. An evaluation is {\it consistent} if no constraint is violated. An evaluation is {\it complete} if it includes 
all variables. The {\it solution} of a CSP is a complete and consistent evaluation. 

In the particular case of continuous (or numerical) CSPs, interval based techniques provide generally one or a list of boxes which enclose the solution. 
The CSP approach is at the same time powerful enough to address complex
problems (NP-hard problems with numerical issues, even in critical
applications) and simple in the definition of a solving framework
\cite{Benhamou:1994:CR:864422,Lebbah2002109}.

Indeed, the classical algorithm to solve a CSP is the branch-and-prune algorithm, 
which needs only an evaluation of the constraints and an initial
domain for variables. While this algorithm is sufficient for many
problems, to solve other problems, some improvements have been achieved, and algorithms based on
contractors have emerged \cite{chabert2009contractor}. The branch-and-contract 
algorithm consists of two main steps: i) the contraction (or
filtering) of one variable and the propagation to the others until a
fixed point reached, then ii) the bisection of the domain of one
variable in order to obtain two problems, easier to solve.

A more detailed description
follows.

\paragraph{Contraction}
A filtering algorithm or contractor is used in a CSP solver to reduce
the domain of variables to a fixed point (or a near fixed point), by
respecting local consistencies. A contractor $Ctc$ can be defined
with the help of constraint programming, analysis or algebra, but it
must satisfy three properties:
\begin{itemize}
 \item $Ctc (\mathcal{D}) \subseteq \mathcal{D}$: contractivity,
 \item $Ctc$ cannot remove any solution: it is conservative,
 \item $\mathcal{D}' \subseteq \mathcal{D} \Rightarrow Ctc
   (\mathcal{D}') \subseteq Ctc(\mathcal{D})$: monotonicity.
\end{itemize}
There are many contractor operators defined in the literature, most notably:
\begin{itemize}
\item (Forward-Backward contractor) By considering only one
  constraint, this method computes the interval enclosure of a node in
  the tree of constraint operations with the children domains (the
  forward evaluation), then refines the enclosure of a node in terms
  of parents domain (the backward propagation). For example, from the
  constraint $x+y = z$, this contractor refines initial domains $[x]$,
  $[y]$ and $[z]$ from a forward evaluation $[z] = [z] \cap
  ([x]+[y])$, and from two backward evaluations $[x] = [x] \cap
  ([z]-[y])$ and $[y] = [y] \cap ([z]-[x])$.
\item (Newton contractor) This contractor, based on the first order Taylor
  interval extension: $[f]([\mathbf{x}]) = f(\mathbf{x}^*) +
  [J_f]([\mathbf{x}]) ([\mathbf{x}] - \mathbf{x}^*)$ with
  $\mathbf{x}^* \in [\mathbf{x}]$, has the property: if $0 \in
  [f]([\mathbf{x}])$, then $[\mathbf{x}]_{k+1} = [\mathbf{x}]_k \cap
  x^*-[J_f]([\mathbf{x}]_k)^{-1} f(\mathbf{x}^*)$ is a tighter
  inclusion of the solution of $f(x)=0$.  Some other contractors based
  on Newton's method, such as the Krawczyk operator~\cite{JKDW01}, have been defined.
\end{itemize}

\paragraph{Propagation}
If a variable domain has been reduced, the reduction is propagated to
all the constraints involving that variable, allowing the
other variable domains to be narrowed. This process is repeated until a fixed point is
reached.

\paragraph{Branch-and-Prune}
A Branch-and-Prune algorithm consists on alternatively br\-an\-ching and
pruning to produce two sub-pavings $\mathcal{L}$ and $\mathcal{S}$,
with $\mathcal{L}$ the boxes too small to be bisected and
$\mathcal{S}$ the solution boxes. We are then sure that all solutions
are included in $\mathcal{L} \cup \mathcal{S}$ and that every point in
$\mathcal{S}$ is a solution.

Specifically, this algorithm traverses a list of boxes $\mathcal{W}$,
initialized $\mathcal{W}$  with the vector $[\mathbf{x}]$ consisting of
the elements of $\mathcal{D}$.  For each box in  $\mathcal{W}$, the following
is done:  i) Prune: the CSP is
evaluated (or contracted) on the current box; if the box is is a solution, it
is added to $\mathcal{S}$; otherwise ii) Branch: if the box is large
enough, it is bisected and the two boxes resulting are added into
$\mathcal{W}$; otherwise the box is added to $\mathcal{L}$.
 
\begin{example}
  An example of the problems that the previously presented tools can solve
  is taken from \cite{Lhomme93consistencytechniques}. The 
  CSP is defined as follows:
  \begin{itemize}
  \item $\mathcal{X} = \{x,y,z,t\}$
  \item $\mathcal{D} = \{[x]=[0,1000],
    [y]=[0,1000],[z]=[0,3.1416],[t]=[0,3.1416]\}$
  \item $\mathcal{C} =
    \{xy+t-2z=4;x\sin(z)+y\cos(t)=0;x-y+\cos^2(z)=\sin^2(t);xyz=2t\}$
  \end{itemize}
  We use a Branch-and-Prune algorithm with the
  Forward-Back\-ward contractor and a propagation algorithm to solve this CSP. The
  solution ($[1.999, 2.001]$, $[1.999, 2.001]$, $[1.57, 1.571]$,
  $[3.14159, 3.1416]$) is obtained with only $6$ bisections.
  \hfill$\blacksquare$
\end{example}

\subsection{Correctness of CSP Applied to Butcher Rules}

By construction, the CSP approach guarantees that the exact
solution of the problem, denoted by $(\tilde{a}_{ij}, \tilde{b}_i,
\tilde{c}_i)$, is included in the solution provided by the
corresponding solver, given by $([a_{ij}], [b_i], [c_i])$. The Butcher
rules are then preserved by inclusion through the use of interval
coefficients.

\begin{theorem}
  If Runge--Kutta coefficients are given by intervals obtained by a CSP
  solver on constraints coming from the order condition defined in
  Theorem~\ref{thm:lte-butcher} then they contain at least one solution which satisfies the Butcher rules.
\end{theorem}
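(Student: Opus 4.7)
The plan is to derive the result directly from the two fundamental soundness properties of the CSP framework laid out in the preceding subsection: the \emph{inclusion property} of interval arithmetic (used in the forward evaluation of each constraint $c_i$) and the \emph{conservativity} of every contractor (``$Ctc$ cannot remove any solution''). Since the theorem only asserts that at least one real solution lies inside the returned box, it suffices to track a single known exact solution $(\tilde a_{ij}, \tilde b_i, \tilde c_i)$ of the Butcher-rule system through the execution of the solver and show it is never discarded.

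First I would fix such an exact real solution $\tilde{\mathbf{x}} = (\tilde a_{ij}, \tilde b_i, \tilde c_i)$ to the order-condition polynomial system given by Theorem~\ref{thm:lte-butcher} (together with the consistency equalities $c_i = \sum_j a_{ij}$ and any structural constraints). The existence of at least one such solution is precisely what classical methods (Gauss--Legendre, Radau, Lobatto, \dots) show constructively, so the theorem is non-vacuous; and by assumption on the CSP setup this $\tilde{\mathbf{x}}$ lies in the initial domain $[\mathbf{x}]_0$ built from $\mathcal{D}$.

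Next I would prove by induction on the branch-and-prune iterations the invariant: at any moment, $\tilde{\mathbf{x}}$ belongs to some box currently stored in $\mathcal{W} \cup \mathcal{S}$. The base case is immediate, since $\tilde{\mathbf{x}} \in [\mathbf{x}]_0 \in \mathcal{W}$. For the inductive step, let $[\mathbf{x}]$ be the box containing $\tilde{\mathbf{x}}$ selected at the current iteration. Two operations can be applied. Contraction replaces $[\mathbf{x}]$ by $Ctc([\mathbf{x}])$; by conservativity, every point of $[\mathbf{x}]$ that satisfies the constraints is kept, and $\tilde{\mathbf{x}}$ satisfies them exactly, hence $\tilde{\mathbf{x}} \in Ctc([\mathbf{x}])$. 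Bisection replaces $[\mathbf{x}]$ by two sub-boxes whose union equals $[\mathbf{x}]$, so $\tilde{\mathbf{x}}$ lies in at least one of them. In either case the invariant is preserved. When the algorithm terminates, every remaining box is either in $\mathcal{S}$ (solution box) or in $\mathcal{L}$ (too small to bisect); in both cases the box still contains $\tilde{\mathbf{x}}$, and the returned coefficients $([a_{ij}], [b_i], [c_i])$ form precisely such a box.

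The one point that deserves care, and which I would highlight as the main obstacle, is that the guarantee hinges entirely on using outward-rounded interval arithmetic in the forward-backward and Newton-type contractors: any floating-point shortcut that rounds an endpoint inward would break conservativity and could cut $\tilde{\mathbf{x}}$ out. Provided each interval primitive respects the inclusion property, the argument above is purely a consequence of Contraction being conservative and Branching being exact (union-preserving), so the final enclosure $([a_{ij}], [b_i], [c_i])$ necessarily contains the exact Butcher-rule solution $\tilde{\mathbf{x}}$, which proves the theorem.
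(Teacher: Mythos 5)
Your proof is correct, but it takes a genuinely different route from the paper's. You prove the set-theoretic containment itself: you fix an exact Butcher-rule solution $(\tilde a_{ij}, \tilde b_i, \tilde c_i)$ and show, by induction over the branch-and-prune iterations, that conservativity of the contractors and exactness of bisection guarantee this point is never discarded, so a returned box $([a_{ij}],[b_i],[c_i])$ contains it. The paper instead takes that containment as given ``by construction'' in the paragraph immediately preceding the theorem, and spends its proof on the downstream consequence: since $\tilde a_{ij}\in[a_{ij}]$, $\tilde b_i\in[b_i]$, $\tilde c_i\in[c_i]$, inclusion isotonicity of interval evaluation gives $[\phi(\tau)]\ni 1/\gamma(\tau)$ for all $\tau$ with $r(\tau)\le p$, hence $\mathbf{y}^{(q)}\in[\mathbf{y}_1^{(q)}]$ for all $q\le p$, and the Taylor expansion of the exact solution is enclosed in that of the interval scheme. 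Your argument is the more elementary and self-contained justification of the literal statement (at least one Butcher-rule solution lies in the output intervals), and it correctly identifies outward rounding as the load-bearing hypothesis; the paper's argument buys the reader the interpretation of what that containment means, namely that the order condition of Theorem~\ref{thm:lte-butcher} is satisfied by inclusion. Two small caveats for your version: the exact solution must actually lie in the initial domains $\mathcal{D}$ (the paper's examples use boxes such as $[-1,1]$, which would exclude methods with larger coefficients, so this is a genuine hypothesis rather than a formality), and when the solver returns several boxes your invariant only places $\tilde{\mathbf{x}}$ in one of them, so ``the'' returned coefficients must be understood as that particular box.
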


\begin{proof}
  Starting from the order condition defined in
  Theorem~\ref{thm:lte-butcher}, and given the additional details in
  \cite{alexandre2016validated}, if the Runge--Kutta coefficients are
  given by intervals, such that $\tilde{a}_{ij} \in [a_{ij}],
  \tilde{b}_i \in [b_i], \tilde{c}_i \in [c_i]$, then $[\phi(\tau)]
  \ni \frac{1}{\gamma(\tau)}\quad\forall \tau, r(\tau) \leq p$. In
  other words, $\mathbf{y}^{(q)} \in [\mathbf{y}_1^{(q)}], \forall q
  \leq p$, and then the derivatives of the exact solution are included
  in the numerical ones, and the Taylor series expansion of
  the exact solution is included (monotonicity of the interval sum) in
  the Taylor series expansion of the numerical solution obtained from the
  Runge--Kutta method with interval coefficients.
\end{proof}

\begin{remark}
  If a method is given with interval coefficients such that
  $\tilde{a}_{ij} \in [a_{ij}], \tilde{b}_i \in [b_i], \tilde{c}_i \in
  [c_i]$, there is an over-estimation of the derivatives
  $|\mathbf{y}^{(q)} - [\mathbf{y}_1^{(q)}]|$. To make this
  over-approximation as small as possible, the enclosure of the
  coefficients has to be as sharp as possible.
\end{remark}

\subsection{Link with Validated Numerical Integration Methods}

To make the Runge--Kutta method validated \cite{alexandre2016validated}, the
challenging question is how to compute a bound on the difference between
the true solution and the numerical solution, defined by
$\mathbf{y}(t_\ell;\mathbf{y}_{\ell-1}) - \mathbf{y}_\ell$. This
distance is associated with the \emph{local truncation error} (LTE) of
the numerical method. We showed that LTE can be easily bounded by
using the difference between the Taylor series of the exact and the
numerical solutions, which is reduced to $\text{LTE} =
\mathbf{y}^{(p+1)}(t_\ell) - [\mathbf{y}_\ell^{(p+1)}]$, with $p$ the order of
the method undere consideration. This difference has to be evaluated on a
specific box, obtained with the Picard-Lindel\"of operator, but this
is outside the scope of this paper, see \cite{alexandre2016validated}
for more details. For a method with interval coefficients, the LTE is
well bounded (even over-approximated), which is not the case for a
method with floating-point coefficients. For a validated method, the
use of interval coefficients is then a requirement.

\section{Stability Properties with Interval Coefficients}
\label{sec:properties}

Runge--Kutta methods have strong stability properties
which are not present for other numerical integration methods such as
multi-step methods, for example, Adams-Moulton methods or BDF
methods~\cite{HNW93}. It is interesting to understand that these
properties, proven in theory, are lost in practice if we use
floating-point number coefficients. In this section, we show 
that the properties of Runge--Kutta methods are preserved with the use
of interval coefficients in the Butcher tableau. The definition of
stability can have a very different form depending on the class of 
problems under consideration.

\subsection{Notion of Stability}
\label{sec:notion-stability}

In \cite{HNW93}, the authors explain that when we do not have the
analytical solution of a differential problem, we must be content with
numerical solutions. As they are obtained for specified initial
values, it is important to know the stability behaviour of the
solutions for all initial values in the neighbourhood of a certain
equilibrium point.

For example, we consider a linear problem $\dot{\mathbf{y}} =
\mathbf{A} \mathbf{y}$, with exact solution $\mathbf{y}(t) =
\exp(\mathbf{A} t) \mathbf{y}_0$. This solution is analytically stable
if all trajectories remain bounded as $t \rightarrow \infty$. Theory
says that it is the case if and only if the real part of the
eigenvalues of $\mathbf{A}$ are strictly negative. If a numerical
solution of this problem is computed with the Euler method, the
system obtained is:
\begin{displaymath}
  \mathbf{y}(t^*+h) \approx \mathbf{y}(t^*) + \mathbf{A}h\mathbf{y}(t^*) =
  (\mathbf{I}+\mathbf{A}h)\mathbf{y}(t^*) =
  \mathbf{F} x(t^*)\enspace.
\end{displaymath}
In the same manner, the explicit Euler method is analytically stable if
the discretized system $\mathbf{y}_{k+1} = \mathbf{F} \mathbf{y}_k$ is
analytically stable.

Many classes of stability exist, such as A-stability, B-stability,
A($\alpha$)-stability, Algebraic stability; see~\cite{HNW93} for more
details. Regarding the linear example above, each stability class is associated with
a particular class of problems.

\subsection{Linear Stability}
\label{sec:linear-stability}
We focus on linear stability for explicit methods, which is easier
to study, and is enough to justify the use of interval coefficients. For
linear stability, the classical approach consists of computing the
\emph{stability domain} of the method (another well-known method being the root locus analysis). The \emph{stability function}
of explicit methods is given in \cite{HNW93}:
\begin{equation}
  R(z) = 1 + z \sum_j b_j + z^2 \sum_{j,k} b_j a_{jk} +
  z^3 \sum_{j,k,l} b_j a_{jk}a_{kl} + \dots\enspace,
\end{equation}
which can be written if the Runge--Kutta method is of order $p$ as
\begin{equation}
  R(z) = 1 + z + \frac{z^2}{2!} + \frac{z^3}{3!}  + \dots +
  \frac{z^p}{p!} + \mathcal{O}(z^{p+1})\enspace.
\end{equation}

For example, the stability function for a fourth-order method with
four stages, such as the classic RK4 method given in Figure~\ref{fig:butcher-tableau-rk4}, is:
\begin{equation}
  \label{eq:stability-function}
 R(z) = 1 + z + \frac{z^2}{2} + \frac{z^3}{6}  + \frac{z^4}{24}.
\end{equation}
The stability domain is then defined by $S = \{ z \in \mathcal{C}:
|R(z)| \leqslant 1\}$. This definition of $S$ can be transformed
into a constraint on real numbers following an algebraic process on
complex numbers, such as
\begin{displaymath}
  S = \{ (x,y): Re\left(\sqrt{Re(R(x+iy))^2 + Im(R(x+iy))^2}\right) \leqslant 1\}.
\end{displaymath}
The constraint produced is given in Equation~(\ref{ct_stab}).
\begin{multline}
  \label{ct_stab}
  \left( \frac{1}{6} x^3 y + \frac{1}{2} x^2 y - \frac{1}{6} x y^3 + x
  y - \frac{1}{6} y^3 +\right.
  \\
  \left.
  y^2 + \frac{1}{24} x^4 + \frac{1}{6} x^3 - \frac{1}{4} x^2 y^2 +
  \frac{1}{2} x^2 - \frac{1}{2} x y^2 + x + \frac{1}{24} y^4 -
  \frac{1}{2} y^2 + 1\right)^{\frac{1}{2}} \leqslant 1
\end{multline}

The set $S$ is now defined by a constraint on real numbers $x,y$ and
can be easily computed by a classical paving method \cite{JKDW01}.  The
result of this method is marked in blue in in Figure~\ref{fig:pav_int} for
an explicit Runge--Kutta fourth-order method with four stages, such as
the classical Runge--Kutta method (RK4).

We can study the influence of the numerical accuracy on the linear
stability. If we compute the coefficients (for example $1/6$ and
$1/24$) with low precision (even exaggeratedly in our case), the
stability domain is reduced as shown in
Figure~\ref{fig:pav_int}. 

First, we consider an error of \num{1e-8},
which is the classical precision of floating-point numbers for some
tools (see Figure~\ref{fig:pav_int} on the left). For example, the coefficient equal in theory to $1/6$ is encoded by $0.16666667$. 
Then, we consider
an error of \num{0.1} for this example, to see the impact: the
stability domain becomes the same as a first order method  such as
Euler's method. If it seems to be exaggerated, in fact it is not rare to find
old implementations of Runge--Kutta with only one decimal digit of accuracy (see
Figure~\ref{fig:pav_int} on the right).

\begin{figure}[ht!]
\centering
\begin{tabular}{c c}
 \includegraphics[width=5.5cm,keepaspectratio=true]{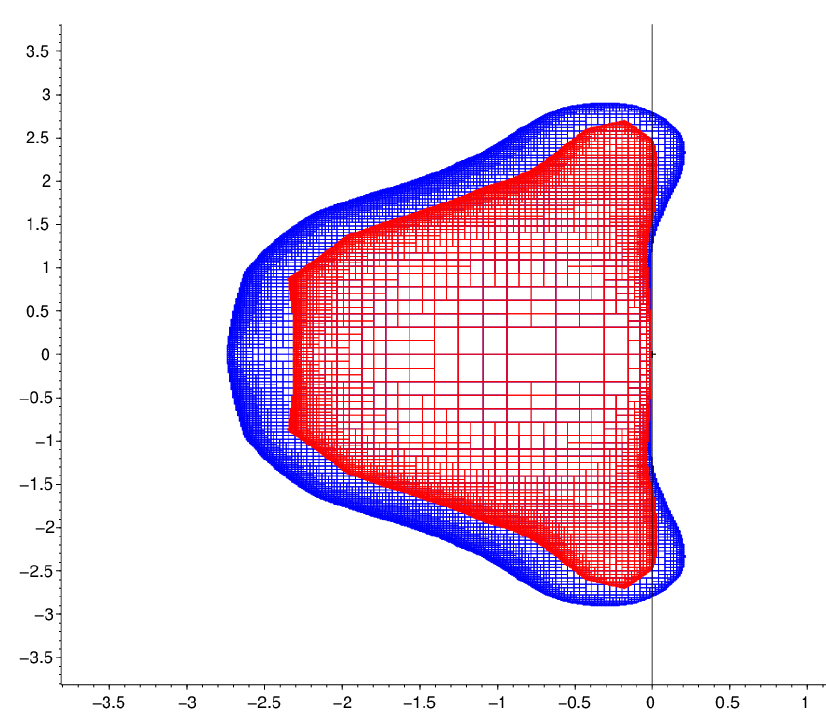} & \includegraphics[width=5.5cm,keepaspectratio=true]{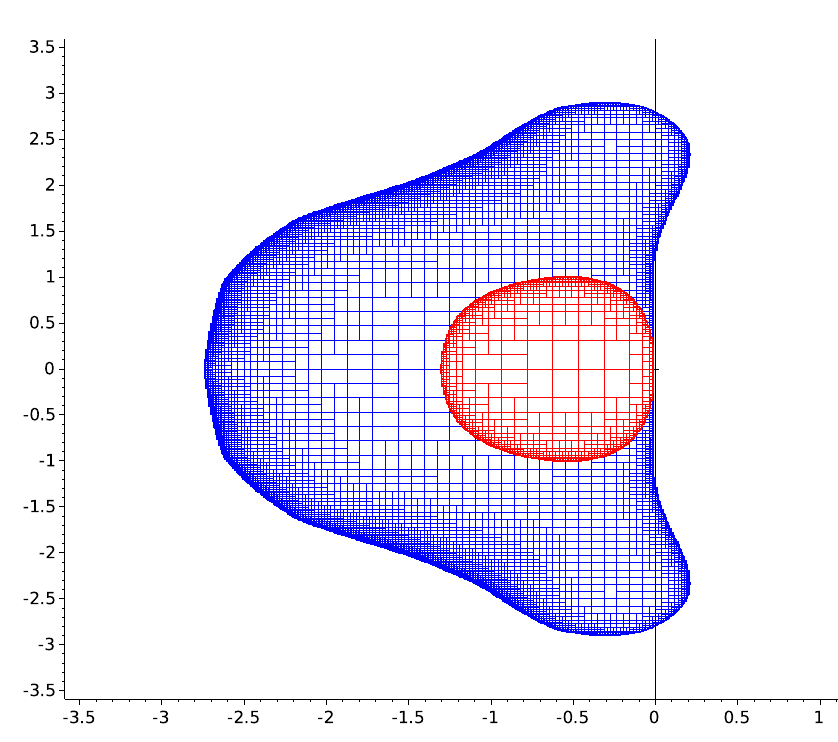}
\end{tabular}
\caption{Paving of stability domain for RK4 method with high precision
  coefficients (blue) and with small error (red) on coefficients
  (left) and large error on coefficients (right).}
 \label{fig:pav_int}
\end{figure}

\subsection{Algebraic Stability}
\label{sec:algebraic-stability}

Another interesting stability class for Runge--Kutta methods is algebraic
stability, which is useful for stiff problems or to solve
algebraic-differential equations. A method is algebraically stable if
the coefficients $a_{ij}$ and $b_i$ in the Butcher tableau are such
that
\begin{align*}
  b_i & \ge 0, \forall i=1, \dots, s: ~ 
  \mathbf{M} = (m_{ij}) = \left(b_ia_{ij} + b_ja_{ji} -
    b_ib_j\right)^s_{i,j=1} \text{is non-negative definite}.
\end{align*}

The test for non-negative definiteness can be done with constraint
programming by solving the eigenvalue problem $\det(\mathbf{M}-\lambda
\mathbf{I}) = 0$ and proving that $\lambda > 0$. $\mathbf{I}$ denotes
the identity matrix of dimension $s \times s$. For example, with a three
stage Runge--Kutta method, $s=3$, the constraint is:
\begin{multline}
  \label{eq:eigen}
  (m_{11}-\lambda)((m_{22}-\lambda)(m_{33}-\lambda) - m_{23}m_{32}) -
  m_{12}(m_{21}(m_{33}-\lambda) - m_{23}m_{13}) +
  \\
  m_{31}(m_{21}m_{32} - (m_{22}-\lambda)m_{31}) = 0.
\end{multline}

Based on a contractor programming approach
\cite{chabert2009contractor}, the CSP to solve is:
\begin{displaymath}
  \text{Equation~\eqref{eq:eigen} has no solution in }]-\infty,0[ \quad
  \equiv \quad \mathbf{M} \quad\text{is non-negative definite.}
\end{displaymath}
A contractor based on the Forward/Backward algorithm is applied to the
initial interval $[\num{-1e8},0]$; if the result obtained is the empty
interval, then Equation~\eqref{eq:eigen} has no negative solution, and
$\mathbf{M}$ is non-negative definite, so the 
method is algebraically stable.

We apply this method to the three-stage Lobatto IIIC, and the result
of contractor is empty, proving there is no negative eigenvalue, hence
the matrix $\mathbf{M}$ is non-negative definite and the Lobatto IIIC
method is algebraically stable, which is consistent with the theory. 
Similarly, we apply it to the three-stage Lobatto IIIA, and the
contractor finds at least one negative eigenvalue (\num{-0.0481125})
so this method is not algebraically stable, which is also
consistent with the theory.

Now, if an algebraically stable method is implemented with
coefficients in floating-point numbers, this property is lost. Indeed,
an error of $\num{1e-9}$ on $a_{ij}$ is enough to lose the
algebraic stability for Lobatto IIIC methods (a negative
eigenvalue appears equal to \num{-1.03041e-05}).

\subsection{Symplecticity}
\label{symplecticity}
Finally, another property of Runge--Kutta methods is tested, the
symplecticity. This property is associated with a notion of energy
conservation. A numerical solution obtained with a symplectic method
preserves an energy quantity, without formally expressing the
corresponding law. 

\begin{definition}[Symplectic integration methods]
Hamiltonian systems, given\\ 
by
 \begin{equation}
 \label{hamil}
  \dot{p}_i=-\frac{\partial H}{\partial q_i}(p,q), \quad \dot{q}_i=\frac{\partial H}{\partial p_i}(p,q),
 \end{equation}
have two remarkable properties: i) the solutions preserve the Hamiltonian $H(p,q)$; ii) the corresponding flow is symplectic, strictly speaking, preserves the differential 2-form $\omega^2=\sum_{i=1}^n dp_i \wedge dq_i$. 
A numerical method used to solve Equation~\eqref{hamil}, while preserving these properties, is a {\bf symplectic integration method}. 
\end{definition}

\begin{definition}[Symplectic interval methods]
 A Runge--Kutta method with interval coefficients $\{[\mathbf{b}],[\mathbf{c}],[\mathbf{A}]\}$, such that a method defined by $\{\mathbf{b},\mathbf{c},\mathbf{A}\}$ with $\mathbf{b} \in [\mathbf{b}]$, $\mathbf{c} \in [\mathbf{c}]$, and $\mathbf{A}  \in [\mathbf{A}]$ is symplectic, is a {\bf symplectic interval method}.
\end{definition}

\noindent A Runge--Kutta method is symplectic if it satisfies the condition 
$\mathbf{M}=0$, where
\begin{displaymath}
  \mathbf{M} = (m_{ij}) = (b_ia_{ij} + b_ja_{ji} -
  b_ib_j)^s_{i,j=1}.
\end{displaymath}

With interval computation of $\mathbf{M}$, it is possible to verify if
$0 \in \mathbf{M}$, which is enough to prove that the method with
interval coefficients is symplectic. Indeed, it is sufficient to prove
that  a trajectory which
preserves a certain energy conservation condition exists inside the numerical solution.

We apply this approach to the three-stage Gauss-Legendre method with coefficients computed with interval arithmetic. The
matrix $\mathbf{M}$ contains the zero matrix (see
Equation~\eqref{symp0}), so this method is symplectic, which is
in agreement with the theory.

\begin{equation}
  \label{symp0}
  \mathbf{M} =
  \begin{pmatrix}
    [-1.3e^{-17}, 1.4e^{-17}] & [-2.7e^{-17}, 2.8e^{-17}] & [-2.7e^{-17}, 1.4e^{-17}]\\
    [-2.7e^{-17}, 2.8e^{-17}] & [-2.7e^{-17}, 2.8e^{-17}] & [-1.3e^{-17}, 4.2e^{-17}]\\
    [-2.7e^{-17}, 1.4e^{-17}] & [-1.3e^{-17}, 4.2e^{-17}] & [-1.3e^{-17}, 1.4e^{-17}]
  \end{pmatrix}
\end{equation}

Now, if we compute only one term of the Gauss-Legendre method with
floating-point numbers, for example $a_{1,2} = 2.0/9.0 -
\sqrt{15.0}/15.0$, the symplecticity property is lost (see
Equation~\eqref{sympnot0}).
\begin{multline}
  \mathbf{M} = \\
  \begin{pmatrix}
    [-1.3e^{-17}, 1.4e^{-17}] & {\bf [-1.91e^{-09}, -1.92e^{-09}]} & [-2.7e^{-17}, 1.4e^{-17}]\\
    {\bf [-1.91e^{-09}, -1.92e^{-09}]} & [-2.7e^{-17}, 2.8e^{-17}] & [-1.3e^{-17}, 4.2e^{-17}]\\
    [-2.7e^{-17}, 1.4e^{-17}] & [-1.3e^{-17}, 4.2e^{-17}] & [-1.3e^{-17}, 1.4e^{-17}]
  \end{pmatrix}\label{sympnot0}
\end{multline}

\begin{note}
 Butcher suggested to add the constraints $b_ia_{ij} + b_ja_{ji} -  b_ib_j = 0, \forall {i,j=1..s}$ to the order conditions to directly compute a symplectic method, by construction. 
\end{note}

\section{A Constraint Optimization Approach to Define New Runge--Kutta
  Methods}
\label{sec:cp_cost}

In the previous section, the properties of Runge--Kutta methods with
interval coefficients in the Butcher tableau have been studied, and we
have shown that these properties are preserved with intervals while they are
often lost with floating-point numbers. In this section, an approach
based on constraint optimization is presented to obtain optimal Runge--Kutta
methods with interval coefficients. The cost function is also discussed, while 
the solving procedure is presented in Section~\ref{subsec:details}.

\subsection{Constraints}

The constraints to solve to obtain a novel Runge--Kutta method are the
ones presented in Section~\ref{sec:butcher-theory-Runge--Kutta}, and the
approach is based on a CSP solver based on contractors and a branching
algorithm (see Section~\ref{sec:rk_interval}). The problem under consideration
can be under-constrained, and more than one solution can exist (for
example, there are countless fully implicit fourth-order methods with
three stages). With the interval analysis approach, which is based on set
representation, a continuum of coefficients can be obtained. As the
coefficients of the Butcher tableau have to be as tight as possible to
obtain sharp enclosure of the numerical solution, a continuum (or more
than one) of solutions is not serviceable. Indeed, in a set of
solutions, or a continuum, it is interesting to find an optimal
solution with respect to a given cost.

Note that using the framework of CPS, adding a cost function and hence
solving a constraint optimization problem can be done following
classical techniques such as those defined in
\cite{hansen_global_2003}.

\subsection{Cost function}
\label{cost}

In the literature, a cost function based on the norm of the local truncation error is sometimes chosen \cite{ralston}.

\subsubsection{Minimizing the LTE}

There exist many explicit second-order methods with two stages. A
general form, shown in Table~\ref{erk2}, has been defined. With
$\alpha=1$, this method is Heun's method, while $\alpha=1/2$ gives the
midpoint method (see \cite{Butcher63} for details about these methods).

\begin{table}[htb]
\centering
\begin{tabular}{c | c c}
 0 & 0 &\\
 $\alpha$ & $\alpha$ &\\
 \hline
 & 1-1/(2$\alpha$) & 1/(2$\alpha$)\\
\end{tabular}
\caption{General form of ERK with 2 stages and order 2}
\label{erk2}
\end{table}

Ralston has proven that $\alpha=2/3$ minimizes the sum of square of
coefficients of rooted trees in the local truncation error computation
\cite{butcher2003numerical}, which is given by:

\begin{equation}
 \min_{\alpha} (-3 \alpha/2 + 1)^2 + 1.
\end{equation}

The resulting Butcher tableau is given in Table~\ref{ralston}.
\begin{table}[htb]
\centering
\begin{tabular}{c | c c}
 0 & 0 &\\
 2/3 & 2/3 &\\
 \hline
 & 1/4 & 3/4\\
\end{tabular}
\caption{Ralston method}
\label{ralston}
\end{table}

\subsubsection{Maximizing order}
\label{sec:ralston}

Another way to obtain a similar result is to try to attain one order
larger than the desired one. For example, if, as Ralston, we try to build an
explicit second-order method with two stages but as close as possible
to the third order by minimizing:
\begin{equation}
 \min_{a_{ij},b_i,c_i} \left(\sum b_i c_j a_{ij} - \frac{1}{6}\right)^2 +
 \left(\sum b_i c_i^2 - \frac{1}{3}\right)^2\enspace.
\end{equation}
The same result is obtained (Table~\ref{ralston_int}). This way of
optimization is more interesting for us because it reuses the
constraint generated by the order condition. It also minimizes the LTE
at a given order $p$, because it tends to a method of order $p+1$ which
has a LTE equal to zero at this order. It is important to note that
minimizing the LTE or maximizing the order leads to the same result;
the difference is in the construction of the cost function and in the
spirit of the approach.

\begin{table}[htb]
  \centering
  \caption{Ralston method with interval coefficients}
  \label{ralston_int}
  \begin{tabular}{c | c c}
    $[-0,0]$ & $[-0,0]$ &
    \\
    $0.6...6[6,7]$ & $0.6...6[6,7]$ &
    \\
    \hline
    & $[0.25,0.25]$ & $[0.75,0.75]$
    \\
  \end{tabular}
\end{table}

\section{Experiments}
\label{sec:expe}

Experiments are performed to, first, re-discover 
Butcher's theory and, second, to find new methods with desired structure.

\subsection{Details of Implementation}
\label{subsec:details}

To implement the approach presented in this paper, two steps need to be 
performed. The first one is a formal procedure used to generate the CSP, 
and the second one is applying a CSP solver based on interval analysis. 

\subsubsection{Definition of the Desired Method and Generation of the CSP}
The definition of the desired method consists of the choice of
\begin{itemize}
\item Number of stages of the method
\item Order of the method
\item Structure of the method (singly diagonal, explicit method, diagonally
implicit
  method, explicit first line and/or stiffly accurate method)
\end{itemize}

Based on this definition and the algorithm defined in \cite{Bornemann01}, a formal 
procedure generates the constraints associated with the structure and Butcher rules
(see Section~\ref{sec:butcher-theory-Runge--Kutta}), and eventually a cost function (see Section~\ref{sec:ralston}). 

\subsubsection{Constraint Programming and Global Optimization}

Problem solution is done with Ibex, a library for interval
computation with a constraint solver and a global optimizer.

This library can address two major problems \cite{ibex_website}:
\begin{itemize}
\item System solving: A guaranteed enclosure for each solution of a
  system of (non-linear) equations is calculated.
\item Global optimization: A global minimizer of some function under
  non-linear constraints is calculated with guaranteed bounds on the
  objective minimum.
\end{itemize}

Global optimization is performed with an epsilon relaxation, so the
solution is optimal but the constraints are satisfied with respect to~the
relaxation. A second pass with the constraint solver is then needed to
find the validated solution inside the inflated optimal solution. The
solver provides its result in the form of an interval vector such
as $([b_i],[c_i],[a_{ij}])$.

Some experiments are performed in the following. First, the
constraint solving part, which allows us to find methods with
sufficient constraints to be the unique solution, is
tested. Second, the global optimizer is used to find the
optimal methods which are under-constrained by order conditions. Both parts are used to find the existing methods and 
potentially new ones. In the following, just few methods that can be
computed are shown. Indeed, numerous methods can be obtained. 

\subsection{Constraint Solving}
The first part of the presented approach is applied. It allows one to
solve the constraints defined during the user interface process,
without cost function. This option permits 
\begin{itemize}
 \item finding a method if there
is only one solution (well-constrained problem),
\item knowing if there is
no solution available,
\item validating the fact that there is a
continuum in which an optimum can be found.
\end{itemize}
  To demonstrate the
efficiency of this solution part, we apply it with user choices that
lead to existing methods and well-known results. After that, we describe some new interesting methods.

\subsubsection{Existing Methods}

\paragraph{Only One Fourth-Order Method with Two Stages:
  Gauss-Legendre}

If we are looking for a fourth-order fully implicit method with two
stages, the theory says that only one method exists, the
Gauss-Legendre scheme.  In the following, we try to obtain the same
result with the solution part of our scheme.

The CSP for this method is defined as follows:
\begin{align*}
  \mathcal{X} & = \{\mathbf{b}, \mathbf{c}, \mathbf{A}\}
  \\
  \mathcal{D} & = \{[-1,1]^2,[0,1]^2,[-1,1]^4\}
  \\
  \mathcal{C} & = \left(\begin{aligned} b_0 + b_1 - 1 & = 0
    \\
    b_0c_0 +b_1c_1 - \frac{1}{2} & = 0
    \\
    b_0(c_0)^2 +b_1(c_1)^2 - \frac{1}{3} & = 0
    \\
    b_0a_{00}c_0 +b_0a_{01}c_1 + b_1a_{10}c_0 +b_1a_{11}c_1 -
    \frac{1}{6} & = 0
    \\
    b_0(c_0)^3 +b_1(c_1)^3 - \frac{1}{4} & = 0
    \\
    b_0c_0a_{00}c_0 +b_0c_0a_{01}c_1 + b_1c_1a_{10}c_0
    +b_1c_1a_{11}c_1 - \frac{1}{8} & = 0
    \\
    b_0a_{00}(c_0)^2 +b_0a_{01}(c_1)^2 + b_1a_{10}(c_0)^2
    +b_1a_{11}(c_1)^2 - \frac{1}{12} & = 0
    \\
    b_0a_{00}a_{00}c_0 +b_0a_{00}a_{01}c_1 + b_0a_{01}a_{10}c_0
    +b_0a_{01}a_{11}c_1 + b_1a_{10}a_{00}c_0 + &
    \\
    b_1a_{10}a_{01}c_1 + b_1a_{11}a_{10}c_0 +b_1a_{11}a_{11}c_1 -
    \frac{1}{24} & = 0
    \\
    a_{00}+a_{01}-c_0 & = 0
    \\
    a_{10}+a_{11}-c_1 & = 0
    \\
    c_0 & < c_1
  \end{aligned}
\right)
\end{align*}

The result from the solver is that there is only one solution, and if
this result is written in the Butcher tableau form
(Table~\ref{gauss2}), we see that this method is a numerically
guaranteed version of Gauss-Legendre.

\begin{table}[ht]
  \centering
  \caption{Guaranteed version of Gauss-Legendre}
  \label{gauss2}
  \begin{tabular}{c | c c}
    $0.21132486540[5,6]$ & $[0.25, 0.25]$ &$-0.038675134594[9,8]$\\
    $0.78867513459[5,6]$ & $0.53867513459[5,6]$ &$[0.25, 0.25]$\\
    \hline
    & $[0.5, 0.5]$ & $[0.5, 0.5]$
  \end{tabular}
\end{table}

\paragraph{No Fifth-Order Method with Two Stages}

It is also easy to verify that there is no fifth-order methods with
two stages. The CSP generated is too large to be presented here. The
solver proves that there is no solution, in less than 0.04 seconds.

\paragraph{Third-Order SDIRK Method with Two Stages}

The solver is used to obtain a third-order Singly Diagonal Implicit Runge--Kutta (SDIRK) method with two
stages. The result obtained is presented in Table~\ref{sdirk}. This
method is known; it is the SDIRK method with $\lambda=\frac{1}{2} - \frac{1}{6}\sqrt{3}$.
\begin{table}[htb]
  \centering
  \caption{Third-order SDIRK method with two stages ($c_1 < c_2$)}
  \label{sdirk}
  \begin{tabular}{c | c c}
    $0.21132486540[5,6]$ & $0.21132486540[5,6]$ & $[0, 0]$\\
    $0.78867513459[5,6]$ & $0.577350269[19,20]$ & $0.21132486540[5,6]$\\
    \hline
    &$[0.5, 0.5]$ & $[0.5, 0.5]$
  \end{tabular}
\end{table}

\begin{note}
Butcher suggested to remove the constraints on the $c_i$, which impose the growth for these latter, to obtain the other SDIRK method with two stages. It was done and two solutions are found: the one corresponding to Table~\ref{sdirk} and the method given in Table~\ref{sdirk2}, which corresponds to $\lambda=\frac{1}{2} + \frac{1}{6}\sqrt{3}$.

\begin{table}[htb]
  \centering
  \caption{Third-order SDIRK method with two stages ($c_1 > c_2$)}
  \label{sdirk2}
  \begin{tabular}{c | c c}
    $0.78867513459[5,6]$ & $0.78867513459[5,6]$ &  $[0, 0]$\\
    $0.21132486540[5,6]$ & $\minus 0.577350269[19,20]$ & $0.78867513459[5,6]$\\   
    \hline
    &$[0.5, 0.5]$ & $[0.5, 0.5]$
  \end{tabular}
\end{table}

%

\end{note}

\subsubsection{Other Methods}
Now, it is possible to obtain new methods with the presented approach.

\begin{remark}
  It is hard to be sure that a method is new because there is no
  database collecting all the methods.
\end{remark}

\paragraph{A Fourth-Order Method with Three Stages, Singly and Stiffly
  Accurate}

In theory, this method is promising because it has the capabilities,
desirable for stiff problems (and for differential algebraic equations),
to simultaneously optimize the Newton's method solution process and to be stiffly accurate (to be more
efficient with respect to stiffness). Our approach finds a unique method, unknown to-date,
satisfying to these requirements.  The
result is presented in Table~\ref{s3o4}.

\begin{table}[htb]
  \footnotesize
  \centering
  \caption{A fourth-order method with three stages, singly and stiffly accurate: S3O4}
  \label{s3o4}
  \begin{tabular}{@{\extracolsep{-1pt}}c | l l r}
    $0.1610979566[59,62]$ &$0.105662432[67,71]$ & $0.172855006[54,67]$ & $-0.117419482[69,58]$\\
    $0.655889341[44,50]\phantom{1}$ &$0.482099622[04,10]$ & $0.105662432[67,71]$ & $0.068127286[68,74]$\\
    $[1, 1]$& $0.3885453883[37,75]$ & $0.5057921789[56,65]$ & $0.105662432[67,71]$\\
    \hline
    &$0.3885453883[37,75]$ & $0.5057921789[56,65]$ & $0.105662432[67,71]$
  \end{tabular}
\end{table}

\begin{note}
 The Singly property has no interest in this case. It is just to show that the constraint of equality for the diagonal coefficients can be taken into account. 
\end{note}

\paragraph{A Fifth-Order Method with Three Stages, Explicit First
  Line}
With only $6$ non zero coefficients in the intermediate computations,
this method could be a good compromise between a fourth-order method
with four intermediate computations (fourth-order Gauss-Legendre) and
sixth-order with nine intermediate computations (sixth-order
Gauss-Legendre). As we know, there is no Runge--Kutta method with the
same capabilities as the Gauss-Legendre method, but with fifth order.
The result is presented in Table~\ref{s3o5}.

\begin{table}[htb]
  \footnotesize
  \centering
  \caption{A fifth-order method with three stages, explicit first line: S3O5}
  \label{s3o5}
  \begin{tabular}{c | c c c}
    $[0, 0]$ &$[0, 0]$ & $[0, 0]$ & $[0, 0]$ \\
    $0.355051025[64,86]$ &$0.152659863[17,33]$ & $0.220412414[50,61]$ & $-0.0180212520[53,23]$ \\
    $0.844948974[23,34]$&$0.087340136[65,87]$ & $0.57802125[20,21]$ & $0.179587585[44,52]$ \\
    \hline
    & $0.111111111[03,26]$ & $0.512485826[00,36]$ & $0.376403062[61,80]$
  \end{tabular}
\end{table}

\begin{note}
 Butcher noticed that this method is not new. Indeed, it is the Radau I method at order 5, this method can be found in \cite{butcher2003numerical}. 
\end{note}

\subsection{Global Optimization}

When  the first part of our solution process provides more than one
solution or a continuum of solutions, we are able to define an
optimization cost to find the best solution with respect to that
cost. We have decided to use a cost which implies that the method tends to
a higher order (Section~\ref{cost}).

\subsubsection{Existing Methods}

\paragraph{Ralston}

We obtain the same result as the one published by Ralston in \cite{ralston}, and described in
Section~\ref{sec:ralston}.

\paragraph{Infinitely many Second-Order Methods with Two Stages, Stiffly
  Accurate and Fully Implicit}
The theory says that there are infinitely many second-order methods with two
stages, stiffly accurate and fully implicit. But there is only one
third-order method: radauIIA.

The generated CSP for this method is defined as follows:
\begin{align*}
  \mathcal{X} & = \left\{\mathbf{b}, \mathbf{c}, \mathbf{A}\right\}
  \\
  \mathcal{D} & = \left\{[-1,1]^2,[0,1]^2,[-1,1]^4\right\}
  \\
  \mathcal{C} & = \left(\begin{aligned}
    b_{0} +b_{1} - 1 & \leqslant\varepsilon
    \\
    b_{0} +b_{1} -1 & \geqslant -\varepsilon
    \\
    b_{0}c_{0} +b_{1}c_{1} - \frac{1}{2} & \leqslant\varepsilon
    \\
    b_{0}c_{0} +b_{1}c_{1} - \frac{1}{2} & \geqslant -\varepsilon
    \\
    a_{00}+a_{01}-c_{0} & = 0
    \\
    a_{10}+a_{11}-c_{1} & = 0
    \\
    c_{0} & \leqslant c_{1}
    \\
    a_{10} - b_{0} & = 0
    \\
    a_{11} - b_{1} & = 0
  \end{aligned}
\right)
  \\
  \text{Minimize} &\; \left(b_{0} (c_{0})^2 +b_{1} (c_{1})^2 -
    \frac{1}{3}\right)^2 + \left(b_{0} a_{00} c_{0} +b_{0} a_{01}
    c_{1} + b_{1} a_{10} c_{0} +b_{1} a_{11} c_{1} -
    \frac{1}{6}\right)^2
\end{align*}

The optimizer find an optimal result in less than 4 seconds; see
Figure~\ref{radau_op}.
\begin{table}[htb]
  \centering
  \caption{Method close to RadauIIA obtained by optimization}
  \label{radau_op}
  \begin{tabular}{c | c c}
    $0.333333280449$ & $0.416655823215$ & $-0.0833225527662$
    \\
    $0.999999998633$ & $0.749999932909$ & $0.250000055725$
    \\
    \hline
    & $0.749999939992$ & $0.250000060009$
  \end{tabular}
\end{table}

The cost of this solution is in $[-\infty,\num{2.89e-11}]$, which
means that $0$ is a possible cost, that is to say that a third-order
method exists.  A second pass with the solver is needed to find the
acceptable solution (without relaxation) by fixing some coefficients
($b_1 = 0.75$ and $c_2=1$ for example); the  well known RadauIIA
method is then obtained.

\subsubsection{Other Methods}
Now, we are able to obtain new methods with our optimizing procedure.

\paragraph{An Optimal Explicit Third-Order Method with Three Stages}
There are infinitely many explicit $(3,3)$-methods, but there is no
fourth-order method with three stages. Our optimization process helps
us to produce a method as close as possible to fourth order (see
Table~\ref{erk33o}). The corresponding cost is computed to be in
$0.00204[35,49]$. As explained before, this method is not validated
due to relaxed optimization. We fix some coefficients (enough to
obtain only one solution) by adding the constraints given in
Equation~\ref{add_cst}. After this first step, the solver is used to
obtain a guaranteed method, close to the fourth order (see
Table~\ref{erk33}).

\begin{equation}
  \label{add_cst}
  \left\{\begin{aligned}
      b_1 & > 0.195905;\\
      b_1 & < 0.195906;\\
      b_2 & > 0.429613;\\
      b_2 & < 0.429614;\\
      b_3 & > 0.37448000;\\
      b_3 & < 0.37448001;\\
      c_2 & > 0.4659;\\
      c_2 & < 0.4660;\\
      c_3 & > 0.8006;\\
      c_3 & < 0.8007;\\
      a_{32} & > 0.9552;\\
      a_{32} & < 0.9553;\\
      a_{31} & > -0.1546;\\
      a_{31} & < -0.1545;
    \end{aligned}\right.
\end{equation}

\begin{table}[htb]
  \footnotesize
  \centering
  \caption{An optimal explicit third-order method with three stages
    (not validated due to relaxation)}
 \label{erk33o}
 \begin{tabular}{c | c c c}
   1.81174261766e-08& 6.64130952624e-09 & 9.93482546211e-09 & -1.11126730095e-09 \\
   0.465904769163&0.465904768843 & -1.07174862901e-09 & 3.94710325991e-09 \\
   0.800685593936& -0.154577204301 & 0.955262788613 & 9.99497058355e-09 \\
    \hline
    &0.195905959102 & 0.429613967179 & 0.37448007372
  \end{tabular}
 \end{table}

\begin{table}[htb]
  \footnotesize
  \centering
  \caption{A guaranteed explicit third-order method with three stages, the
    closest to fourth-order}
  \label{erk33}
  \begin{tabular}{c | c c c}
    [0, 0] &[0, 0] & [0, 0] & [0, 0]\\
    $0.4659048[706,929]$ &$0.4659048[706,929]$ & [0, 0] & [0, 0]\\
    $0.8006855[74,83]$&$-0.154577[20,17]$ & $0.9552627[48,86]$ & [0, 0]\\
    \hline
    &$0.19590[599,600]$ & $0.42961[399,400]$ & $0.3744800[0,1]$
  \end{tabular}
\end{table}

If we compute the order conditions up to fourth order, we verify that
this method is third-order by inclusion, and close to fourth-order.  We
compute the Euclidean distance between order condition and obtained
values. For our optimal method the distance is
$0.045221[2775525,3032049]$ and for Kutta(3,3) \cite{kutta1901beitrag}, which is known to be
one of the best explicit (3,3) method\footnote{``Von den neueren Verfahren halte ich das folgende von Herrn Kutta angegebene f\"ur das beste.``, C.Runge 1905 \cite{HNW93}}, $0.058926$. Our method is then
closer to fourth order than Kutta(3,3).  As far as we know, this method is
new.

\begin{table}[htb]
  \centering
  \caption{Order conditions up to fourth order}
  \begin{tabular}{c | c c}
    Order & Result of optimal method & Order condition\\
    \hline
    Order 1 & [0.99999998, 1.00000001] & 1\\
    \hline
    Order 2 & [0.499999973214, 0.500000020454] & 0.5\\
    \hline
    Order 3 & [0.33333330214, 0.333333359677] & 0.333333333333\\
    Order 3 & [0.166666655637, 0.166666674639] & 0.166666666667\\
    \hline
    Order 4 & [0.235675128044, 0.235675188505] & 0.25\\
    Order 4 & [0.133447581964, 0.133447608305] & 0.125\\
    Order 4 & [0.0776508066238, 0.0776508191916] & 0.0833333333333\\
    Order 4 & [0, 0] & 0.0416666666667\\
  \end{tabular}
  \label{ord_4}
\end{table}

\begin{figure}[ht!]
  \centering
  \includegraphics[width=9cm,keepaspectratio=true]{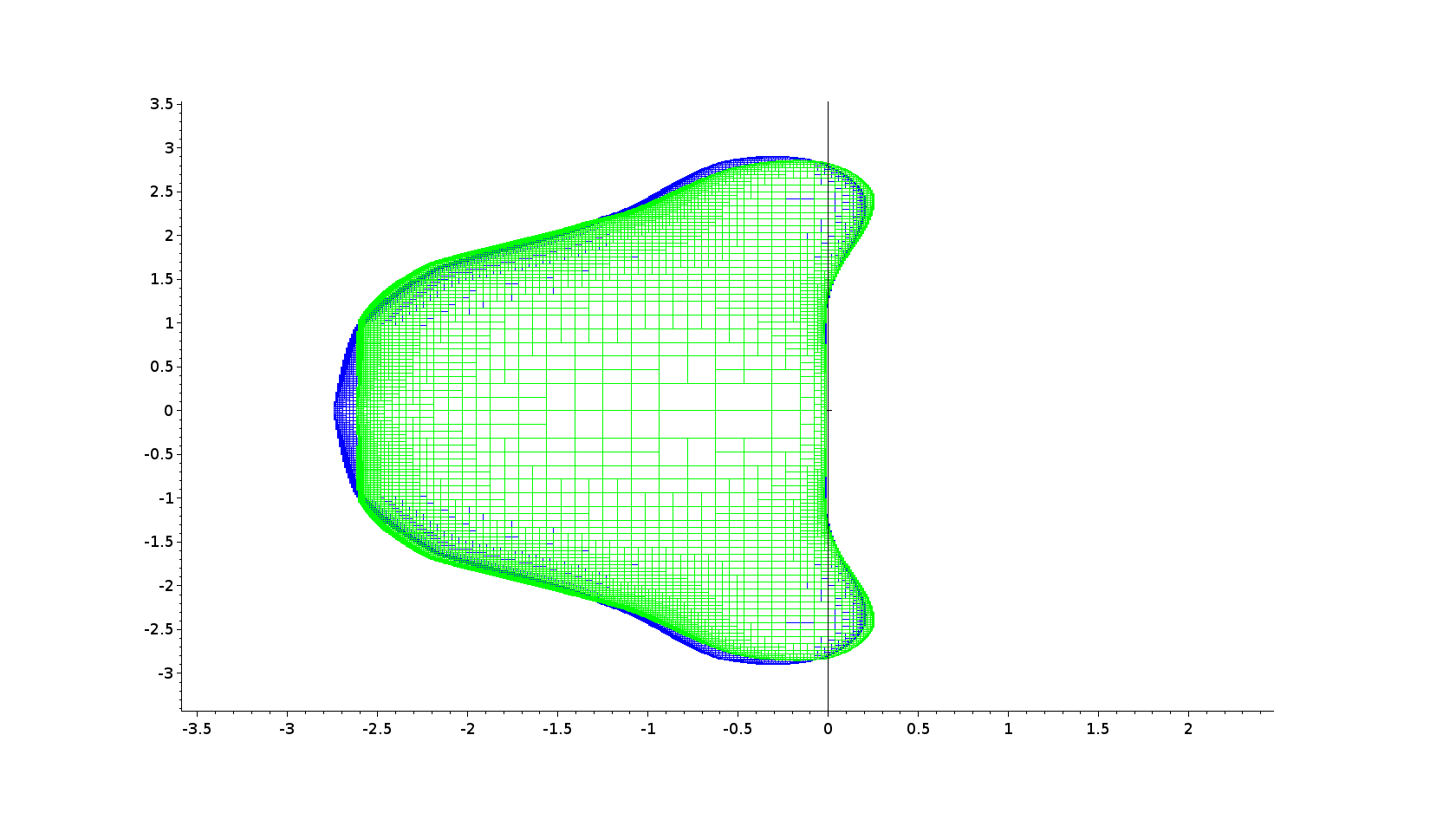}
  \caption{Paving of stability domain for RK4 method with high
    precision coefficients (blue) and for ERK33 (green).}
  \label{fig:pav_erk33}
\end{figure}

\section{Implementation  in the DynIBEX Library}
\label{sec:application}

DynIBEX offers a set of validated numerical integration methods based
on Runge--Kutta schemes to solve initial value problem of ordinary
differential equations and for DAE in Hessenberg index $1$ form. Even
if our approach is  applied not only to validated integration but also to
classical numerical integration with interval coefficients, the
validated integration allows us to obtain a validated enclosure of the
final solution of the simulation. This enclosure provides, with its
diameter, a guaranteed measure of the performance of the integration
scheme. The computation time increases rapidly with respect to the order of the
method; because of the LTE, its complexity is $\mathcal{O}(n^{p+1})$,
with $n$ the dimension of the problem and $p$ the order. The experimental results
provide the sharpest enclosure of the final solution with the lowest
possible order.

We implement three new methods: S3O4 (Table~\ref{s3o4}), S3O5
(Table~\ref{s3o5}), and ERK33 (Table~\ref{erk33}).

\subsection{Experiments with S3O4}
The test is based on an oil reservoir problem,  a stiff
problem given by the initial value problem:
\begin{equation}
  \dot{\mathbf{y}} = \begin{bmatrix}
    \dot{y_0}\\
    \dot{y_1}
  \end{bmatrix}=
  \begin{bmatrix}
    y_1 \\
    y_1^2 - \frac{3}{\epsilon + y_0^2}
  \end{bmatrix}, \text{ with } y(0)=(10,0)^T \text{ and } \epsilon=\num{1e-4}
  \enspace.
\end{equation}
A simulation up to $t=40s$ is performed. This problem being stiff, the
results of the new method S3O4 are compared with the Radau family,
specially the RadauIIA of third and fifth order. The results are
summarized in Table~\ref{res_s3o4}.

\begin{table}[htb]
  \centering
  \caption{Results for S3O4}
  \label{res_s3o4}
  \begin{tabular}{c | c c c}
    Methods & time & no.~steps & norm of diameter of final solution \\
    \hline
    S3O4 & $39$ & $1821$  & \num{5.9e-5}
    \\
    Radau3 & $52$ & $7509$ & \num{2.e-4}
    \\
    Radau5 & $81$ & $954$ & \num{7.6e-5}
    \\
  \end{tabular}
\end{table}

S3O4 is a singly implicit scheme, to optimize the Newton's method solving, and
stiffly accurate, to be more efficient with respect to stiff problems. Based on
experimental results, S3O4 seems to be as efficient as the fifth-order method RadauIIA, 
but faster than the third-order method RadauIIA.

\subsection{Experiments with S3O5}
The test is based on an interval problem, which can quickly explode,
given by the initial value problem:
\begin{equation}
  \dot{\mathbf{y}} = \begin{bmatrix}
    \dot{y_0}\\
    \dot{y_1}\\
    \dot{y_2}
  \end{bmatrix}=
  \begin{bmatrix}
    1\\
    y_2\\
    \frac{y_1^3}{6}-y_1+2 \sin(\lambda y_0)
  \end{bmatrix}, \text{ with } y(0)=(0,0,0)^T \text{ and } \lambda \in
  [2.78,2.79]\enspace.
\end{equation}
A simulation up to $t=10s$ is performed. Since this problem includes an
interval parameter, a comparison with Gauss-Legendre family makes
sense, Gauss--Legendre methods have a good contracting property. Thus, we compare
to the fourth- and sixth-order Gauss-Legendre methods. Results
are summarized in Table~\ref{res_s3o5}.

\begin{table}[htb]
  \centering
  \caption{Results for S3O5}
  \label{res_s3o5}
  \begin{tabular}{c | c c c}
    Methods & time & no.~steps & norm of diameter of final solution \\
    \hline
    S3O5 & $92$ & $195$ & $5.9$
    \\
    Gauss4 & $45$ & $544$ & $93.9$
    \\
    Gauss6 & $570$ & $157$ & $7.0$
    \\
  \end{tabular}
\end{table}

The results show that S305 is more efficient than the sixth-order
Gauss-Legendre method and five time faster. Although the fourth-order
Gauss-Legendre method is two times faster, the final solution is much
wider.

\subsection{Experiments with ERK33}
The test is based on the classical Van der Pol problem, which contains
a limit circle, and is given by the initial value problem:
\begin{equation}
  \dot{\mathbf{y}} = \begin{bmatrix}
    \dot{y_0}\\
    \dot{y_1}
  \end{bmatrix}=
  \begin{bmatrix}
    y_1\\
    \mu (1-y_0^2) y_1 - y_0
  \end{bmatrix}, \text{ with } y(0)=(2,0)^T \text{ and } \mu=1\enspace.
\end{equation}
A simulation up to $t=10s$ is performed. Since this problem contains a
limit circle, it can be effectively simulated with an explicit
scheme. The two most famous schemes are the explicit Runge--Kutta  (RK4), the most
used, and Kutta, known to be the optimal explicit third-order
scheme. We compare ERK33 with these methods, and present the results in Table~\ref{res_erk33}.

\begin{table}[htb]
  \centering
  \caption{Results for ERK33}
  \label{res_erk33}
  \begin{tabular}{c | c c c}
    Methods & time & no.~steps & norm of diameter of final solution \\
    \hline
    ERK33 & $3.7$ & $647$ & \num{2.2e-5}
    \\
    Kutta(3,3) & $3.5$ & $663$ & \num{3.4e-5}
    \\
    RK4 & $4.3$ & $280$ & \num{1.9e-5}
    \\
  \end{tabular}
 \end{table}
 These results show that ERK33 is equivalent in time consumed but with
 performance closer to RK4.

\subsection{Discussion}

After experimentation with the three new Runge--Kutta methods obtained
with the constraint programming approach presented in this paper, it
is clear that these methods are effective.  Moreover, even with
coefficients of the Butcher tableau expressed in intervals with a
diameter of \num{1e-10} (for S3O4 described in Table~\ref{s3o4} and S3O5 described in Table~\ref{s3o5}) to
\num{1e-8} (for ERK33 described in Table~\ref{erk33}), the final solution is often
narrower for the same or higher order methods with exact
coefficients. A strong analysis is needed, but it seems that by
guaranteeing the properties of the method, the contractivity of the
integration schemes is improved.

\begin{note}
 As a global remark, Butcher suggested to combine the presented approach with algebraic knowledges. The positiveness of the coefficients $b_i$ can sometimes be used. Moreover, the $C(2)$ condition \cite{HNW93} could also provides an additional constraint. It is a promising improvement clue. 
\end{note}

\section{Conclusion}
\label{sec:conclusion}

In this paper, a new approach to discovering new
Runge--Kutta methods with interval coefficients has been presented. In a first step, we
show how interval coefficients can preserve
properties such as stability or symplecticity, unlike
coefficients expressed in floating-point numbers. We have presented two tools,
a CSP solver used to find the unique solution of the Butcher
rules, and an optimizer procedure to obtain the best method with respect to a
well chosen cost. This cost will provide a method of order $p$ with a LTE as close as possible to 
the LTE of a method at order $p+1$. 
Finally, the methods obtained guarantee that the desired order and
properties are obtained. These new methods are then implemented in a
validated tool called DynIbex, and some tests on problems well chosen
with respect to the required properties are performed. The results lead us to
conclude that the approach is valid and efficient in the sense that
the new methods provide highly competitive results with respect to existing
Runge--Kutta methods.

In future work, we will embed our approach in a high level scheme, based
on a branching algorithm to also verify properties such as stability or
symplecticity, with the same verification procedures as are presented in
this paper.

\section*{Acknowledgements}
 I am grateful to John Butcher for his comments on the original version of this paper. His remarks led to this extended version.

\bibliographystyle{plain}
\bibliography{rk_interval}

\end{document}